\newtheorem{thm}{Theorem}[section]
\newtheorem{lem}[thm]{Lemma}
\newtheorem{prop}[thm]{Proposition}
\theoremstyle{definition}
\newtheorem{rem}[thm]{Remark}
\numberwithin{equation}{section}
\newcommand{\al}{\alpha}
\newcommand{\ga}{\gamma}
\newcommand{\Ga}{\Gamma}
\newcommand{\si}{\sigma}
\newcommand{\ze}{\zeta}
\newcommand{\bfz}{{\mathbb {Z}}}
\newcommand{\bfq}{{\mathbb {Q}}}
\newcommand{\x}{\times}
\renewcommand{\t}{\mathbf t}
\newcommand{\TT}{\mathcal T}
\newcommand{\FF}{\mathcal F}
\newcommand{\LL}{\mathcal L}
\newcommand{\C}{\mathbb C}
\newcommand{\Z}{\mathbb Z}
\newcommand{\Q}{\mathbb Q}
\newcommand{\CP}{{\mathbb C}{\mathbb P}}
\newcommand{\del}{\partial}
\newcommand{\lra}{\longrightarrow\ }
\newcommand{\hf}{{{\widehat {HF}}}}
\DeclareMathOperator{\PD}{PD}
\DeclareMathOperator{\Spin}{Spin}
\begin{document}

\title{On the existence of tight contact structures on Seifert fibered 3--manifolds}

\author{Paolo Lisca}
\address{Dipartimento di Matematica ``L. Tonelli'',\\
Universit\`a di Pisa \\
Largo Bruno Pontecorvo 5\\
I-56127 Pisa, Italy} 
\email{lisca@dm.unipi.it}

\secondauthor{Andr\'{a}s I. Stipsicz}
\secondaddress{R\'enyi Institute of Mathematics\\
Re\'altanoda utca 13--15, Budapest H-1053, Hungary and\\
Mathematics Department, Columbia University\\
2990 Broadway, New York, NY 10027}
\secondemail{stipsicz@math-inst.hu}

\begin{abstract}
We determine the closed, oriented Seifert fibered 3--manifolds which
carry positive tight contact structures. Our main tool is a new
non--vanishing criterion for the contact Ozsv\'ath--Szab\'o invariant.
\end{abstract}

\primaryclass{57R17} 
\secondaryclass{57R57} 

\keywords{Seifert fibered 3--manifolds, tight contact structures,
Heegaard Floer homology, contact Ozsv\'ath--Szab\'o invariants}

\maketitle

\section{Introduction}
\label{s:intro}

Let $Y$ denote a closed, oriented 3--manifold. A smooth 1--form $\alpha \in \Omega^1 (Y)$ is 
a~\emph{positive contact form} on $Y$ if $\alpha\wedge d\alpha >0$. The 2--plane field
\[
\xi := \ker \alpha\subset TY
\]
is called a \emph{(cooriented) positive contact structure}.  An embedded disk $D^2\subset Y$ is an
\emph{overtwisted disk} for $\xi$ if $TD^2=\xi$ along $\partial D^2$. The
contact structure $\xi $ is overtwisted if $Y$ contains an overtwisted
disk for $\xi$, otherwise $\xi$ is called~\emph{tight}.  

According to~\cite{Eli} every homotopy class of 2--plane fields on a
closed, oriented 3--manifold contains a unique up to isotopy
overtwisted contact structure, therefore the classification of
overtwisted structures reduces to a homotopy theoretic question.
Tight contact structures are much harder to find in general. In fact,
their existence is not known for a general 3--manifold $Y$, although
their presence seems to be related to the geometry of the underlying
3--manifold. Tight contact structures up to isotopy are classified on
$S^3$, lens spaces~\cite{Gir, Ho1}, circle bundles over
surfaces~\cite{Ho2} and some special Seifert fibered
3--manifolds~\cite{Ghiggold, Ghigg, GLS1, GLS2, Wu}.  In this paper we
address the existence question for tight contact structures on general
Seifert fibered 3--manifolds.

Using Legendrian surgery, Gompf \cite{Gompf} showed that each Seifert
fibered 3--manifold admits a Stein fillable (hence tight) positive
contact structure for at least one choice of orientation.  Gompf also
conjectured that the Poincar\'e homology sphere $-P$ with its
nonstandard orientation possesses no Stein fillable contact
structure. Gompf's conjecture was verified in~\cite{paolo} using
Seiberg--Witten theory, while later Etnyre and Honda~\cite{EH} showed
that $-P$ admits no positive, tight contact structures.  This result
was extended in~\cite{OSzII} to an infinite family $\{M_n\}_{n\geq 1}$
of small Seifert fibered 3--manifolds, described in the next
paragraph.

Let $T_{2,2n+1}\subset S^3$, for each integer $n\geq 1$, denote the
$(2,2n+1)$--torus knot, whose planar diagram is illustrated in
Figure~\ref{f:torusknot}.
\begin{figure}[ht]
\begin{center}
\psfrag{2n1}{\small $2n+1$ crossings}
\epsfig{file=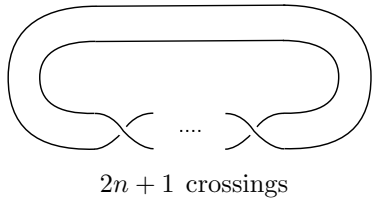, height=2.5cm}
\end{center}
\caption{The diagram of the torus knot $T_{2,2n+1}$, $n\geq 1$.}
\label{f:torusknot}
\end{figure}
Let $M_n$, for each $n\geq 1$, be the 3--manifold obtained by
performing $(2n-1)$--surgery along $T_{2,2n+1}$ in $S^3$. The
3--manifold $M_n$ can be also be viewed as the boundary of the
4--dimensional plumbing prescribed by the weighted tree of
Figure~\ref{f:Mnplumb}, where weights equal to $-2$ are omitted.  (For
the equivalence of the two presentations of $M_n$ see
\cite[Figure~2]{OSzII}.)
\begin{figure}[ht]
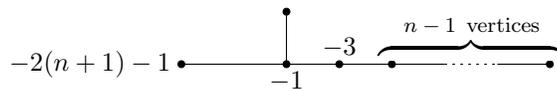

\begin{center}
\setlength{\unitlength}{1mm} \unitlength=0.7cm
\begin{graph}(6,1)(-2,0)
\graphnodesize{0.15}

\roundnode{n1}(-1,0)
\roundnode{n2}(1,0)
\roundnode{n3}(2,0)
\roundnode{n4}(3,0)
\rectnode{n5}[0,0](4,0)
\rectnode{n6}[0,0](5,0)
\roundnode{n7}(6,0)
\roundnode{n9}(1,1)

\edge{n1}{n2}
\edge{n2}{n3}
\edge{n3}{n4}
\edge{n4}{n5}
\edge{n5}{n6}[\graphlinedash{1 2}]
\edge{n6}{n7}
\edge{n2}{n9}

\autonodetext{n1}[w]{\small $-2(n+1)-1$} 
\autonodetext{n2}[s]{\small $-1$} 
\autonodetext{n3}[n]{\small $-3$}
\freetext(4.5,0.53){$\overset{\text{$n - 1$ vertices}}{\overbrace{\hspace{70pt}}}$} 

\end{graph}
\end{center}
\caption{The plumbing tree describing $M_n$}
\label{f:Mnplumb}
\end{figure}
It is well known that $M_n$ carries a Seifert fibered structure
for each $n\geq 1$, and the manifold $M_1$ is diffeomorphic to $-P$ above. 
The main result of the present paper is

\begin{thm}\label{t:main}
Let $Y$ be a closed, oriented Seifert fibered 3--manifold. Then, either $Y$ 
is orientation--preserving diffeomorphic to $M_n$ for some $n\geq 1$, or $Y$ 
carries a positive, tight contact structure.
\end{thm}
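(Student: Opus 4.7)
The plan is to reduce the question to small Seifert fibered rational homology spheres over $S^2$ and, within that class, to the narrow range of Seifert invariants where direct Legendrian surgery in $(S^3,\xi_{\mathrm{std}})$ does not immediately yield a Stein fillable contact structure; for the remaining manifolds the new non--vanishing criterion for the contact Ozsv\'ath--Szab\'o invariant mentioned in the abstract is used to certify the tightness of a candidate contact structure obtained by contact surgery.

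First I would invoke Gompf's construction to supply a Stein fillable positive contact structure for one of the two possible orientations of any Seifert fibered $Y$, so the task reduces to the opposite orientation. Using Honda's classification of tight contact structures on circle bundles together with standard Legendrian surgery, both orientations are covered whenever the base orbifold has positive genus or is non--orientable, so the problem reduces to small Seifert manifolds over $S^2$. For such $Y$ with normalized Seifert invariants $(e_0;(\alpha_1,\beta_1),(\alpha_2,\beta_2),(\alpha_3,\beta_3))$ satisfying $0<\beta_i<\alpha_i$, the cases $e_0\geq 0$ and $e_0\leq -3$ already admit tight contact structures, by a transverse perturbation of a horizontal foliation and by a direct Legendrian surgery on a link in $(S^3,\xi_{\mathrm{std}})$, respectively, so the interesting range is $e_0\in\{-2,-1\}$.

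Next I would realize $Y$ or $-Y$ as the boundary of a plumbing of disk bundles over $S^2$, similar in shape to Figure~\ref{f:Mnplumb}, and attempt to present this plumbing as a contact $(\pm1)$--surgery diagram on a Legendrian link in $(S^3,\xi_{\mathrm{std}})$. The new non--vanishing criterion, established separately in the paper, should then allow one to conclude that $c(\xi)\neq 0$ for the candidate contact structure $\xi$ produced in this way, provided the Legendrian link has Thurston--Bennequin and rotation numbers falling in a sufficiently controlled range. A combinatorial case analysis based on the continued--fraction expansions of $\beta_i/\alpha_i$ then determines for which Seifert invariants this construction succeeds.

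The main obstacle is to show that the construction succeeds for \emph{every} $Y$ in the residual range that is not diffeomorphic to some $M_n$, and equivalently to match the list of plumbings for which no suitable Legendrian realization exists with the family $\{M_n\}_{n\geq 1}$ depicted in Figure~\ref{f:Mnplumb}. Since \cite{OSzII} already establishes that no $M_n$ carries a positive tight contact structure, no further obstruction argument is needed; the entire difficulty is concentrated in the existence half, where the interplay between the combinatorics of the plumbing graph, the admissible Legendrian representatives, and the hypotheses of the non--vanishing criterion for $c(\xi)$ must be controlled uniformly over an infinite family of Seifert invariants.
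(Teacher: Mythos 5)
Your overall architecture matches the paper's: reduce to small Seifert fibered spaces $Y(-1;r_1,r_2,r_3)$ (equivalently your $e_0\in\{-2,-1\}$ range), dispose of the easy cases by Legendrian surgery in $(S^3,\xi_{\mathrm{std}})$, and certify the residual family by the non--vanishing criterion for $c(\xi)$. But as written the proposal stops exactly where the theorem begins. The statement ``a combinatorial case analysis based on the continued--fraction expansions then determines for which Seifert invariants this construction succeeds'' is not an argument: the entire content of the theorem is that this analysis terminates with precisely the family $\{M_n\}$ excluded, and you give no mechanism for carrying it out or for why it should close up. In the paper this is done by (i) an explicit characteristic class $c$ on a closed $4$--manifold $R\cong\CP^2\#N\cpkk$ containing $Y$ with $R\setminus\nu(Y)=W_\Ga\cup W_{\Ga'}$, (ii) a verification that $c|_{W_\Ga}$ computes $d_3(\xi)$ for an explicitly drawn contact surgery presentation, and (iii) a long explicit construction of a \emph{full path} through $c|_{W_{\Ga'}}$ on the dual plumbing, which is the combinatorial heart of the proof and occupies Sections~4 and~5. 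Your phrase about Thurston--Bennequin and rotation numbers ``in a sufficiently controlled range'' gestures only at step (ii); step (iii) is absent.

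Two further omissions would make the plan fail as stated. First, the criterion only yields $c(\xi)\neq 0$ from a degree computation because $Y$ is an $L$--space, and this is deduced from the hypothesis that $Y$ carries \emph{no transverse contact structures} (transverse structures being fillable, hence tight, this hypothesis is free); without invoking that dichotomy you cannot identify the correction term $d(-Y,\t_\xi)$ with the degree of the full--path element, and the criterion does not apply. Second, the verification cannot be done uniformly in all the continued--fraction parameters at once: the paper first truncates the third leg to length one (Lemma on $\tilde\Ga$), checks that truncation preserves both $e>0$ and the absence of transverse structures, proves the existence result for the truncated manifolds, and then recovers $Y$ by contact $(-1)$--surgeries --- with a separate argument (rational surgery on torus knots, via the earlier Ozsv\'ath--Szab\'o invariant results) for the case where the truncated manifold is some $M_n$ but $Y$ itself is not. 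Without some such reduction your ``uniform control over an infinite family'' has no proposed implementation.
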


Since by~\cite[Corollary~1.2]{OSzII} each 3--manifold $M_n$ carries no
positive tight contact structures, Theorem~\ref{t:main} yields a
complete solution to the existence problem for positive tight contact
structures on Seifert fibered 3--manifolds.

The paper is organized as follows. In Section~\ref{s:seifert} we
collect the results about tight contact structures on Seifert fibered
3--manifolds known before Heegaard Floer theory.  In
Section~\ref{s:HF} we introduce Heegard Floer theory methods and we
use them to give a new criterion for the existence of tight contact
structures on Seifert fibered 3--manifolds. In
Sections~\ref{s:firstappl} and~\ref{s:secondappl} we apply the
criterion to prove the existence of tight contact structures for
several families of Seifert fibered 3--manifolds. In
Section~\ref{s:final} we use the results of
Sections~\ref{s:seifert},~\ref{s:firstappl} and~\ref{s:secondappl} to
prove Theorem~\ref{t:main}.

{\bf Acknowledgements:} The second author was partially supported by OTKA
49449, by EU Marie Curie TOK program BudAlgGeo and by Clay Mathematics
Institute.

\section{First constraints on the Seifert invariants}
\label{s:seifert}

In this section we collect several known results on the existence of tight contact 
structures on Seifert fibered 3--manifolds, summarizing in Proposition~\ref{p:reduction2} 
what was known before  Heegard Floer theory. For definitions and basic facts about Seifert 
fibered 3--manifolds we refer to~\cite{Orlik}.

Let $f\co Y_{e_0}\to S^2$ be an oriented three--dimensional circle
bundle over the 2--sphere, with Euler number $e_0\in\Z$. Let $F_1,
\ldots, F_k\subset Y_{e_0}$ be $k$ distinct fibers of the map $f$, and
denote by $Y(e_0; r_1,\ldots,r_k)$, with $r_i\in (0,1)\cap\Q$, the
oriented 3--manifold resulting from $(-\frac{1}{r_i})$--surgery along
$F_i$, $i=1,\ldots, k$, with the convention that the $0$--framing on
$F_i$ is given naturally by the fibration $f$.  It is a well--known
fact that each manifold $Y(e_0; r_1,\ldots,r_k)$ carries a Seifert
fibration over $S^2$ with $k$ multiple fibers and, conversely, each
oriented Seifert fibered 3--manifold with base $S^2$ and $k$ multiple
fibers is orientation--preserving diffeomorphic to
$Y(e_0;r_1,\ldots,r_k)$ for some $r_i\in (0,1)\cap\Q$, $i=1,\ldots, k$
and $e_0\in \Z$.

The~\emph{rational Euler characteristic} of $Y(e_0; r_1,\ldots,r_k)$
is, by definition,
\[
e(Y(e_0; r_1,\ldots,,r_k)):=e_0+r_1+\cdots+r_k. 
\] 
A simple computation shows that the 3--manifold $Y=Y(e_0;
r_1,\ldots,r_k)$ is a rational homology sphere, that is $b_1(Y)=0$, if
and only if $e(Y)\neq 0$.

\begin{prop}\label{p:reduction1}
Let $Y$ be a closed, oriented, Seifert fibered 3--manifold.  Then,
either $Y$ carries a tight contact structure or $Y$ is
orientation--preserving diffeomorphic to $Y(-1;r_1,r_2,r_3)$ for some
$r_i\in (0,1)\cap\Q$, $i=1,2,3$, with $e(Y(-1;r_1,r_2,r_3))>0$.
\end{prop}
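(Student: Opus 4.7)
The plan is to eliminate every Seifert fibered $Y$ outside the claimed family by exhibiting a positive tight contact structure, using a handful of classical tools. First I would reduce to the case that the Seifert fibration has base $S^2$ with $k\geq 3$ multiple fibers: Seifert manifolds over a base of positive genus carry tight contact structures by Honda's classification (and its extensions to Seifert fibrations), and Seifert manifolds over $S^2$ with at most two multiple fibers are lens spaces (including $S^3$ and $S^1\times S^2$), hence tight by classical results of Giroux and Honda.

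Write $Y=Y(e_0;r_1,\ldots,r_k)$ in normalized form with $r_i\in(0,1)\cap\Q$. If $e(Y)=0$ then $b_1(Y)>0$ and $Y$ admits a horizontal, hence taut, foliation, so by Eliashberg--Thurston $Y$ carries a tight contact structure. If $e(Y)\neq 0$ and $e_0\leq -2$, the standard Seifert plumbing of $Y$ has all weights $\leq -2$, and Gompf's Legendrian surgery construction yields a Stein fillable, hence tight, contact structure. If $e(Y)<0$ and $e_0=-1$, then $\sum r_i<1$, the plumbing is negative definite, and $Y$ is the link of a quasi--homogeneous isolated normal surface singularity whose Milnor fiber gives a Stein filling. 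This leaves only $e(Y)>0$ with either $e_0\geq 0$ (for any $k\geq 3$) or $e_0=-1$ and $k\geq 4$; for both of these I would invoke the Eisenbud--Hirsch--Neumann/Jankins--Neumann existence criterion for a horizontal foliation on a Seifert manifold over $S^2$, which is satisfied throughout both ranges and, via Eliashberg--Thurston, again produces a tight contact structure on $Y$. What remains --- exactly the range where no horizontal foliation is available --- is precisely the family $Y(-1;r_1,r_2,r_3)$ with $e(Y)>0$ in the statement.

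The main obstacle will be verifying the horizontal foliation criterion in the last two ranges and understanding why it fails precisely for the small Seifert case $e_0=-1, k=3, e(Y)>0$. Concretely, the criterion requires producing rational auxiliary data satisfying an inequality depending delicately on $e_0$, $k$, and the $r_i$'s; in the narrow range $k=3, e_0=-1, e(Y)>0$ no such data can in general be produced, which is exactly what isolates the exceptional family. The principal bookkeeping step is to make sure every Seifert tuple outside this narrow range is indeed covered by one of the four available tools (Honda for positive genus/lens spaces, Gompf for $e_0\leq -2$, Milnor fillability for links of singularities, and Eliashberg--Thurston applied to horizontal foliations).
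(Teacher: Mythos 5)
Your reduction to base $S^2$ with $k\geq 3$, and your treatment of the cases $e(Y)=0$ (horizontal foliation plus Eliashberg--Thurston), $e_0\leq -2$ (Gompf's Legendrian surgery on a plumbing with all weights $\leq -2$), and $e(Y)<0$, $e_0=-1$ (link of a singularity with $\C^*$--action) all match the paper's proof. The genuine gap is in the last step: the claim that the Eisenbud--Hirsch--Neumann/Jankins--Neumann criterion for a horizontal foliation ``is satisfied throughout both ranges'' $\{e(Y)>0,\ e_0\geq 0\}$ and $\{e(Y)>0,\ e_0=-1,\ k\geq 4\}$ is false. For the first range, $Y(0;\tfrac12,\tfrac12,\tfrac12)$ has base orbifold $S^2(2,2,2)$ with positive orbifold Euler characteristic, hence finite fundamental group; by Novikov it carries no taut foliation at all, let alone a horizontal one. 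For the second range, $Y(-1;\tfrac12,\tfrac12,\tfrac12,\tfrac12)$ has Euclidean base orbifold $S^2(2,2,2,2)$ and $e=1\neq 0$, and the Milnor--Wood--type inequality underlying the EHN criterion forces $e=0$ for any manifold with a horizontal foliation over a base orbifold of non-negative Euler characteristic. So the Eliashberg--Thurston route cannot close either case, and your heuristic that the foliation criterion fails \emph{precisely} on the family $Y(-1;r_1,r_2,r_3)$, $e>0$, is also not right in either direction (it fails elsewhere, and it holds for many realizable triples inside that family).

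The paper closes these two cases by entirely different means, and you should too. For $e_0\geq 0$ (indeed for every $e_0\neq -1$ over $S^2$), Gompf's Theorem~5.4 produces Stein fillable structures directly; his construction is not limited to the ``all weights $\leq -2$'' plumbings, so there is no need for foliations when $e_0\geq 0$. For $e_0=-1$ and $k\geq 4$ the manifold contains a vertical incompressible torus, and the existence of (infinitely many) tight contact structures on toroidal manifolds follows from Colin--Giroux--Honda. With those two substitutions your case analysis becomes exactly the paper's. Two smaller points: the positive-genus base case is handled in the literature by Gompf's construction rather than by Honda's classification of circle bundles, and in the $e(Y)=0$ case you should note that $k\geq 3$ rules out $S^2\times S^1$, which is needed before invoking tautness and Eliashberg--Thurston.
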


\begin{proof} 
In~\cite[Theorem~5.4]{Gompf} the existence of Stein fillable (hence
tight) contact structures is proved for a Seifert fibered 3--manifold
$Y\to F$ provided either $F\neq S^2$ or $Y=Y(e_0;r_1,\ldots ,r_k)$
with $e_0\neq -1$.  Therefore, to prove the proposition it suffices to
argue that a Seifert fibered 3--manifold $Y$ orientation--preserving
diffeomorphic to $Y(-1;r_1,\ldots,r_k)$ for some $r_i\in (0,1)\cap\Q$,
$i=1,2,3$, carries a tight contact structure provided either $k\neq 3$
or $e(Y(-1;r_1,\ldots,r_k))\leq 0$.
If the number of multiple fibers $k\leq 2$ then $Y$ is a lens space,
which is well--known to carry tight contact structures~\cite{Gir}.  If
$k\geq 4$ then $Y$ contains incompressible tori and therefore it
admits infinitely many distinct tight structures by~\cite{CGH}.
If $e(Y)=0$ then $Y$ admits a smooth foliation $\FF$ transverse to the Seifert 
fibration~\cite{EHN}. Moreover, since the fibration has $3$ multiple fibers we have $Y\not\cong S^2\x S^1$. 
Therefore $\FF$ is a taut foliation and by~\cite[Theorem~2.4.1 and 
Corollary~3.2.8]{ET} it can be approximated by tight contact structures. Finally, if
$e(Y)<0$ then $Y$ is the link of an isolated surface singularity with $\C^*$--action~\cite[Corollary~5.3]{NR}, and as such it is known to carry tight contact structures 
(see e.g.~\cite{CNP}).
\end{proof}

Proposition~\ref{p:reduction2} below shows that for many of the
manifolds $Y(-1;r_1,r_2,r_3)$ with $e(Y)>0$ the conclusion of
Theorem~\ref{t:main} holds. In order to state the proposition we need
some preparation which will be useful also later on. Let
\[
Y=Y(-1; r_1, r_2, r_3),\quad r_1,r_2,r_3\in (0,1)\cap\Q,
\] 
be a small Seifert fibered 3--manifold with $e_0(Y)=-1$. From now on we will 
assume that $r_1\geq r_2\geq r_3$, and that there are continued fraction expansions
\begin{equation}\label{e:cfracs}
\frac 1{r_1} = [a_1, \ldots ,a_{n_1}],\quad
\frac 1{r_2} = [b_1, \ldots , b_{n_2}],\quad
\frac 1{r_3} = [c_1, \ldots , c_{n_3}],
\end{equation}
for some integers $a_i, b_j, c_k \geq 2$, where, by definition, 
\[
[x_1,\ldots,x_n]:=
x_1 - \cfrac{1}{x_2 -
          \cfrac{1}{\ddots -
             \cfrac{1}{x_n
}}}.
\]
Expansions~\eqref{e:cfracs} determine a plumbing tree $\Ga$ as in 
Figure~\ref{f:generaltree} and hence, as the result of the corresponding plumbing construction, 
an oriented 4--manifold $W_\Ga$ with $\del W_\Ga=Y$. It is not hard to show that $b_2^+(W_\Ga)=1$ if and only if $e(Y)>0$~\cite{NR}. As indicated in Figure~\ref{f:generaltree}, we will denote by $L_i$, for $i=1,2,3$, the leg of the weighted tree  $\Ga$ corresponding to $r_i$. More precisely, $L_1$ will denote the set of vertices of $\Ga$ with weights $-a_1,\ldots, -a_{n_1}$, 
and analogously for $L_2$ and $L_3$. Moreover, we will denote by $l(L_i)$, $i=1,2,3$, the length of $L_i$, that is its cardinality. 
\begin{figure}[ht]
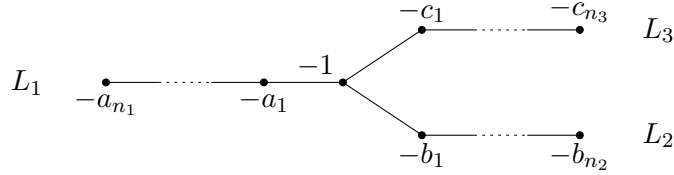

\begin{center}
\setlength{\unitlength}{1mm}
\unitlength=0.7cm
\begin{graph}(12,3.5)(-1,-1)
\graphnodesize{0.15}

\roundnode{m1}(1,1)
\rectnode{m2}[0,0](2,1)
\rectnode{m3}[0,0](3,1)
\roundnode{m4}(4,1)

\roundnode{m5}(5.5,1)

\roundnode{m6}(7,0)
\rectnode{m7}[0,0](8,0)
\rectnode{m8}[0,0](9,0)
\roundnode{m9}(10,0)

\roundnode{m10}(7,2)
\rectnode{m11}[0,0](8,2)
\rectnode{m12}[0,0](9,2)
\roundnode{m13}(10,2)

\edge{m1}{m2}
\edge{m2}{m3}[\graphlinedash{1 2}]
\edge{m3}{m4}
\edge{m4}{m5}
\edge{m5}{m6}
\edge{m6}{m7}
\edge{m7}{m8}[\graphlinedash{1 2}]
\edge{m8}{m9}

\edge{m5}{m10}
\edge{m10}{m11}
\edge{m11}{m12}[\graphlinedash{1 2}]
\edge{m12}{m13}

\autonodetext{m1}[s]{$-a_{n_1}$}
\autonodetext{m4}[s]{$-a_1$}
\autonodetext{m5}[nw]{$-1$}
\autonodetext{m6}[s]{$-b_1$}
\autonodetext{m9}[s]{$-b_{n_2}$}
\autonodetext{m10}[n]{$-c_1$}
\autonodetext{m13}[n]{$-c_{n_3}$}

\freetext(-0.5,1){$L_1$}
\freetext(11.5,0){$L_2$}
\freetext(11.5,2){$L_3$}

\end{graph}
\end{center}
\caption{\quad The plumbing tree $\Ga$ associated with $Y(-1;r_1,r_2,r_3)$}
\label{f:generaltree}
\end{figure}
Similarly, we have 
\[
-Y(-1;r_1,r_2,r_3)=Y(-2;1-r_1,1-r_2,1-r_3) = \del W_{\Ga'}, 
\]
where $\Ga'$ is the weighted tree ``dual'' to $\Ga$, determined by the
continued fraction expansions of $\frac 1{1-r_i}$, $i=1,2,3$. A useful
formulation of the relationship between the continued fraction
expansions of $\frac 1r$ and $\frac 1{1-r}$ is given by
Riemenschneider's point rule~\cite{Ri}. The dual tree $\Ga'$ is
illustrated in Figure~\ref{f:generaldualtree}.
\begin{figure}[ht]
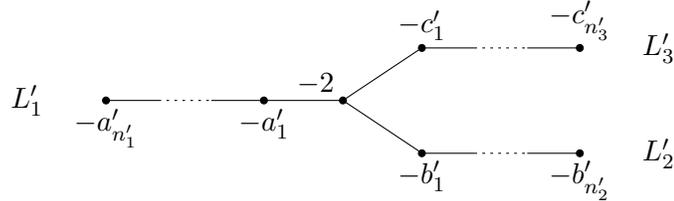

\begin{center}
\setlength{\unitlength}{1mm}
\unitlength=0.7cm
\begin{graph}(12,3.5)(-1,-1)
\graphnodesize{0.15}

\roundnode{m1}(1,1)
\rectnode{m2}[0,0](2,1)
\rectnode{m3}[0,0](3,1)
\roundnode{m4}(4,1)

\roundnode{m5}(5.5,1)

\roundnode{m6}(7,0)
\rectnode{m7}[0,0](8,0)
\rectnode{m8}[0,0](9,0)
\roundnode{m9}(10,0)

\roundnode{m10}(7,2)
\rectnode{m11}[0,0](8,2)
\rectnode{m12}[0,0](9,2)
\roundnode{m13}(10,2)

\edge{m1}{m2}
\edge{m2}{m3}[\graphlinedash{1 2}]
\edge{m3}{m4}
\edge{m4}{m5}
\edge{m5}{m6}
\edge{m6}{m7}
\edge{m7}{m8}[\graphlinedash{1 2}]
\edge{m8}{m9}

\edge{m5}{m10}
\edge{m10}{m11}
\edge{m11}{m12}[\graphlinedash{1 2}]
\edge{m12}{m13}

\autonodetext{m1}[s]{$-a'_{n'_1}$}
\autonodetext{m4}[s]{$-a'_1$}
\autonodetext{m5}[nw]{$-2$}
\autonodetext{m6}[s]{$-b'_1$}
\autonodetext{m9}[s]{$-b'_{n'_2}$}
\autonodetext{m10}[n]{$-c'_1$}
\autonodetext{m13}[n]{$-c'_{n'_3}$}

\freetext(-0.5,1){$L'_1$}
\freetext(11.5,0){$L'_2$}
\freetext(11.5,2){$L'_3$}

\end{graph}
\end{center}
\caption{\quad The ``dual'' tree $\Ga'$ associated with $-Y(-1;r_1,r_2,r_3)$}
\label{f:generaldualtree}
\end{figure}

The proof of Proposition~\ref{p:reduction2} requires the use of contact 
surgery~\cite{DG1, DG2, DGS}, so we briefly recall the necessary notions.
Suppose that $L\subset (Y, \xi )$ is a Legendrian knot in a
contact 3--manifold.  The contact structure equips $L$ with a framing
(that is, a trivialization of its normal bundle) called the~\emph{contact
framing} of $L$. Let $Y_L^{\pm }$ denote the 3--manifold obtained by 
$(\pm 1)$--surgery along $L$, where the surgery coefficient is
measured with respect to the contact framing of $L$. According to the
classification of tight contact structures on a solid
torus~\cite{Ho1}, the restriction of $\xi$ to the complement of a standard 
neighborhood of $L$ extends
uniquely, up to isotopy, to the surgered manifolds $Y_L^+$ and $Y_L^-$, 
restricting as a tight structure on the glued--up torus. Therefore, the knot $L$ decorated 
with a $(+1)$ or $(-1)$ uniquely specifies a contact 3--manifold
$(Y_L^+, \xi _L^+)$ or $(Y_L^-, \xi _L^-)$. By~\cite{Eli, Gompf} any contact $(-1)$--surgery 
along a link in the standard contact 3--sphere produces a Stein fillable, hence tight 
contact structure. The notion of contact $(\pm 1)$--surgery can be extended to any
nonzero rational surgery along a Legendrian knot. The extension of the contact 
structure is unique, however, only for surgery coefficients of the form
$\frac{1}{k}$, with $k\in \bfz $. In~\cite{DG2, DGS} it is shown that
a rational contact surgery can be replaced by a sequence of contact
$(\pm 1)$--surgeries. For negative surgeries only $(-1)$--surgeries 
are needed in the replacement.

\begin{prop}\label{p:reduction2}
Let $Y$ be an oriented, Seifert fibered 3--manifold which is not 
orientation--preserving diffeomorphic to $Y(-1;r_1,r_2,r_3)$, with  $1>r_1\geq r_2\geq r_3>0$
satisfying~\eqref{e:cfracs} and each of the following: 
\begin{itemize}
\item
$e(Y(-1;r_1,r_2,r_3)) = -1 + r_1 + r_2 + r_3 > 0$;
\item
$a_1=\cdots=a_k=2$ for some $k\geq 1$ and either 
\begin{itemize}
\item
$n_1=k$ or 
\item
$n_1>k$ and $a_{k+1}>2$;
\end{itemize}
\item
$c_1\geq b_1=k+2$. 
\end{itemize}
Then, $Y$ carries a tight contact structure.
\end{prop}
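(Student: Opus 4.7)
By Proposition~\ref{p:reduction1} we may assume that $Y$ is orientation--preserving diffeomorphic to $Y(-1;r_1,r_2,r_3)$ with $e(Y)>0$ and $r_1\geq r_2\geq r_3$. The plan is to show that whenever at least one of the three bulleted conditions on the continued fraction expansions~\eqref{e:cfracs} fails, $Y$ carries a Stein fillable (hence tight) contact structure, realized as the outcome of contact $(-1)$--surgery on a Legendrian link in the standard contact 3--sphere $(S^3,\xi_{st})$. By the theorems of Eliashberg and Gompf recalled just above, any such presentation automatically produces a Stein fillable contact structure on $Y$.

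The negation of the bulleted conditions splits into three subcases: (I) $a_1\geq 3$; (II) the first condition holds with initial $(-2)$--run of length $k\geq 1$ in $L_1$, but $b_1\neq k+2$; (III) the first two conditions hold, but $c_1<k+2=b_1$. In each subcase one begins from the plumbing tree $\Ga$ and uses Kirby calculus, guided by the Riemenschneider point rule and by the freedom to pass through the dual tree $\Ga'$, to rewrite $Y$ as the boundary of a plumbed 4--manifold whose handles can all be realized by Legendrian unknots in $(S^3,\xi_{st})$ with contact framing one less than the prescribed topological framing, and in particular with all topological framings $\leq -2$. The key local moves are the blow--down of the central $(-1)$--vertex, slam--dunks that absorb a $(-2)$--framed unknot into an adjacent component, and handle--slides that redistribute framing across edges of the tree. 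The inequalities $a_1 \geq 3$ in~(I), $b_1 \neq k+2$ in~(II), and $c_1 < b_1$ in~(III) are each sufficient to arrange the redistribution so that the resulting diagram has all framings $\leq -2$.

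Once a surgery presentation of $Y$ with every framing $\leq -2$ is in hand, the Legendrian realization step is routine: a Legendrian unknot in $(S^3,\xi_{st})$ attains $\tb=-1$ and, after $s$ stabilizations, any $\tb=-1-s$, so each vertex of the modified tree can be realized by a suitably stabilized Legendrian unknot, linked with its neighbors according to the edges of the tree. The contact $(-1)$--surgery along the resulting Legendrian link reproduces the smooth surgery, and the contact structure it induces on $Y$ is Stein fillable.

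The main obstacle is combinatorial: one must check, case by case, that the negation of one of the bulleted conditions leaves enough room in the continued fractions $[a_1,\ldots,a_{n_1}]$, $[b_1,\ldots,b_{n_2}]$, $[c_1,\ldots,c_{n_3}]$ for the above Kirby moves to eliminate the central $(-1)$--vertex while keeping every remaining framing $\leq -2$. The three numerical conditions in the statement of the proposition are precisely those for which no such reshuffling is possible, and the corresponding Seifert fibered manifolds $Y(-1;r_1,r_2,r_3)$ are exactly the ones whose tight contact existence question falls outside the scope of classical Legendrian surgery arguments and must be attacked by the Heegaard Floer methods developed in Section~\ref{s:HF}.
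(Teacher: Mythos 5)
Your reduction via Proposition~\ref{p:reduction1}, and your treatment of the case $a_1\geq 3$ and of the subcase $b_1\geq k+3$ of your case (II) (blow down the central $(-1)$--vertex together with the initial $(-2)$--chain of $L_1$, observe that the resulting positively linked unknots all have framings $\leq -2$, realize them by stabilized Legendrian unknots and invoke the Eliashberg--Gompf theorem) agree with the paper. However, your case division and the key step in the remaining case both have problems. First, your subcase (III), $c_1<b_1$, is vacuous: the normalization $r_1\geq r_2\geq r_3$ forces $a_1\leq b_1\leq c_1$, so the only genuine content of the third bullet is the equality $b_1=k+2$. The real residual case, which your subcase (II) absorbs without argument, is $b_1\leq k+1$.

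In that case your claimed strategy --- rearrange by Kirby moves until every framing is $\leq -2$ and then perform contact $(-1)$--surgeries in $(S^3,\xi_{st})$ --- does not go through. Blowing down the central $(-1)$--vertex turns the first $(-2)$--vertex of $L_1$ into a $(-1)$, so the cascade of blow--downs is forced; after $j$ of them the unknot corresponding to the first vertex of $L_2$ has framing $-b_1+j$, and when $b_1\leq k+1$ this framing reaches $0$ before the supply of $(-1)$--circles is exhausted. You cannot stop the cascade with all framings $\leq -2$, and you offer no alternative sequence of moves achieving this; the assertion that the inequality $b_1\neq k+2$ ``is sufficient to arrange the redistribution'' is exactly the point that needs proof and is left unsubstantiated. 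The paper handles $b_1\leq k+1$ by a genuinely different idea: it performs exactly $b_1$ blow--downs, accepts the resulting $0$--framed unknot $U$, interprets the $0$--surgery on $U$ as producing $S^2\times S^1$, and realizes the remaining surgeries as contact $(-1)$--surgeries on the standard Stein fillable $(S^2\times S^1,\xi_0)$, using the fact that $U$ links the first vertex of $L_3$ positively at least twice. This passage through $(S^2\times S^1,\xi_0)$, rather than $(S^3,\xi_{st})$, is the ingredient missing from your proposal.
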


\begin{proof}
By Proposition~\ref{p:reduction1} we may assume $Y\cong Y(-1;r_1,r_2,r_3)$, with 
$e(Y)>0$ for some $1>r_1\geq r_2\geq r_3>0$. 
Notice that $r_1\geq r_2\geq r_3$ implies $a_1\leq
b_1\leq c_1$. We can think of the weighted tree $\Ga$ of Figure~\ref{f:generaltree} 
as prescribing an integral surgery diagram for $Y$, with each vertex corresponding 
to an unknot, and each weight corresponding to a surgery coefficient. 
In the case  $a_1\geq 3$ we can ``blow down'', in the sense of Kirby calculus, 
the central $(-1)$--circle to get a surgery diagram of unknots, each 
with surgery coefficient $\leq -2$. Moreover, it is easy to see that the 
resulting framed link can be isotoped to Legendrian position so that on each component 
the required topological surgery can be realized by some negative contact surgery. 
Therefore, by well--known results~\cite{Gompf} in this case $Y$ 
carries Stein fillable structures. This means that we may assume $a_1=\cdots =a_k=2$ for some 
$k\geq 1$, so either $n_1=k$ or $n_1>k$ and $a_{k+1}>2$. 

If $b_1\geq k+3$ we can blow down  the $k+1$  unknots with framing $(-1)$, 
corresponding to the central vertex of $\Ga$ together with the first $k$ vertices of $L_1$. 
The components of the resulting framed link $\LL$ are pairwise positively 
linked, and it is easy to see as before that $\LL$ has a Legendrian 
representative such that each topological surgery can be realized by a negative 
contact surgery. By~\cite{DG2, DGS} and~\cite{Gompf} this implies that the 3--manifold $Y$ resulting from the surgery carries Stein fillable structures, so we may assume $b_1\leq k+2$. 

To conclude the proof it suffices to show that $Y$ carries Stein fillable 
structures if $b_1\leq k+1$. In this case we blow--down $b_1$  
 $(-1)$--circles instead of the available $k+1$. After the blow--down operations, 
the unknot $U$ corresponding to the first vertex of $L_2$ has framing 
$0$, and the unknot $U'$ corresponding to the first vertex of $L_3$ has 
framing $-c_1+b_1\leq 0$. Moreover, since $b_1\geq 2$, $U$ and $U'$ link positively 
at least twice. The result of the $0$--surgery on $U$ can be viewed as 
$S^2\x S^1$.  Then, due to the linking between $U$ and $U'$, the remaining topological 
surgeries in $S^2\x S^1$ can be realized by negative contact surgeries on 
$(S^2\x S^1,\xi_0)$, where $\xi_0$ is the standard Stein fillable contact structure 
on $S^2\x S^1$ (see e.g.~\cite[Section~3]{GLS1} for similar arguments).
\end{proof}

\begin{rem}\label{r:delicate}
Observe that each of the 3--manifolds $M_n$ defined in Section~\ref{s:intro}, known not to admit
tight contact structures, falls outside the range of applicability of Proposition~\ref{p:reduction2}. 
Thus, we can rephrase Proposition~\ref{p:reduction2} by saying that in order to prove Theorem~\ref{t:main} 
it suffices to establish the existence of tight contact structures on each 3--manifold distinct from every
$M_n$ and associated with a plumbing tree $\Ga$ as in Figure~\ref{f:specialtree}. 
The proof of this existence result  will occupy the rest of the paper. We will need arguments of a fairly delicate nature when compared with those used in the proof of Proposition~\ref{p:reduction2}. The dual tree $\Ga'$ is shown in Figure~\ref{f:specialdualtree}, where 
weights $-2$ are omitted. Moreover: 
\[
a'_1=
\begin{cases} 
k+1\quad\text{if}\quad n_1=k,\\
k+2\quad\text{if}\quad n_1>k,
\end{cases}
\quad\text{and}\quad
b'_{k+1}
\begin{cases}
=2\quad\text{if}\qua n_2=1,\\
>2\quad\text{if}\quad n_2>1.

\end{cases}
\]
\end{rem}

\begin{figure}[ht]
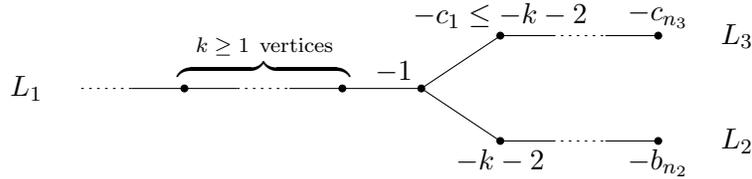

\begin{center}
\setlength{\unitlength}{1mm}
\unitlength=0.7cm
\begin{graph}(13,3.5)(-2,-1)
\graphnodesize{0.15}

\rectnode{m-1}[0,0](-1,1)
\rectnode{m0}[0,0](0,1)
\roundnode{m1}(1,1)
\rectnode{m2}[0,0](2,1)
\rectnode{m3}[0,0](3,1)
\roundnode{m4}(4,1)

\roundnode{m5}(5.5,1)

\roundnode{m6}(7,0)
\rectnode{m7}[0,0](8,0)
\rectnode{m8}[0,0](9,0)
\roundnode{m9}(10,0)

\roundnode{m10}(7,2)
\rectnode{m11}[0,0](8,2)
\rectnode{m12}[0,0](9,2)
\roundnode{m13}(10,2)

\edge{m0}{m-1}[\graphlinedash{1 2}]
\edge{m0}{m1}
\edge{m1}{m2}
\edge{m2}{m3}[\graphlinedash{1 2}]
\edge{m3}{m4}
\edge{m4}{m5}
\edge{m5}{m6}
\edge{m6}{m7}
\edge{m7}{m8}[\graphlinedash{1 2}]
\edge{m8}{m9}

\edge{m5}{m10}
\edge{m10}{m11}
\edge{m11}{m12}[\graphlinedash{1 2}]
\edge{m12}{m13}

\autonodetext{m5}[nw]{$-1$}
\autonodetext{m6}[s]{$-k-2$}
\autonodetext{m9}[s]{$-b_{n_2}$}
\autonodetext{m10}[n]{$-c_1\leq -k-2$}
\autonodetext{m13}[n]{$-c_{n_3}$}

\freetext(-2,1){$L_1$}
\freetext(11.5,0){$L_2$}
\freetext(11.5,2){$L_3$}
\freetext(2.5,1.6){$\overset{\text{$k\geq 1$ vertices}}{\overbrace{\hspace{65pt}}}$}

\end{graph}
\end{center}
\caption{The constrained plumbing tree $\Ga$}
\label{f:specialtree}
\end{figure}

\begin{figure}[ht]
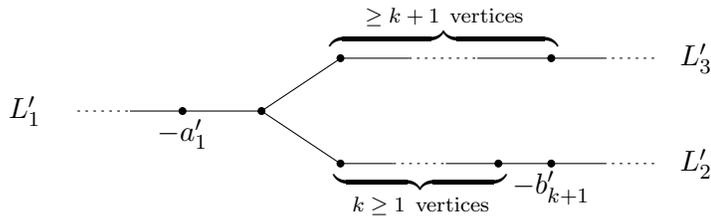

\begin{center}
\setlength{\unitlength}{1mm}
\unitlength=0.7cm
\begin{graph}(14,3.5)(0,-1)
\graphnodesize{0.15}

\rectnode{m2}[0,0](2,1)
\rectnode{m3}[0,0](3,1)
\roundnode{m4}(4,1)

\roundnode{m5}(5.5,1)

\roundnode{m6}(7,0)
\rectnode{m7}[0,0](8,0)
\rectnode{m8}[0,0](9,0)
\roundnode{m9}(10,0)
\roundnode{m10}(11,0)
\rectnode{m11}[0,0](12,0)
\rectnode{m12}[0,0](13,0)

\roundnode{m14}(7,2)
\rectnode{m15}[0,0](8.3,2)
\rectnode{m16}[0,0](9.6,2)
\roundnode{m17}(11,2)
\rectnode{m18}[0,0](12,2)
\rectnode{m19}[0,0](13,2)

\edge{m2}{m3}[\graphlinedash{1 2}]
\edge{m3}{m4}
\edge{m4}{m5}
\edge{m5}{m6}
\edge{m6}{m7}
\edge{m7}{m8}[\graphlinedash{1 2}]
\edge{m8}{m9}
\edge{m9}{m10}
\edge{m10}{m11}
\edge{m11}{m12}[\graphlinedash{1 2}]

\edge{m5}{m14}
\edge{m14}{m15}
\edge{m15}{m16}[\graphlinedash{1 2}]
\edge{m16}{m17}
\edge{m17}{m18}
\edge{m18}{m19}[\graphlinedash{1 2}]

\autonodetext{m4}[s]{$-a'_1$}
\autonodetext{m10}[s]{$-b'_{k+1}$}

\freetext(8.5,-0.6){$\underset{\text{$k\geq 1$ vertices}}{\underbrace{\hspace{65pt}}}$}
\freetext(8.95,2.6){$\overset{\text{$\geq k+1$ vertices}}{\overbrace{\hspace{86pt}}}$}

\freetext(1,1){$L'_1$}
\freetext(13.75,0){$L'_2$}
\freetext(13.75,2){$L'_3$}

\end{graph}
\end{center}
\caption{\quad The constrained dual tree $\Ga'$}
\label{f:specialdualtree}
\end{figure}

\section{Contact invariants and tight contact structures}
\label{s:HF}

In this section we introduce and show how to use the crucial
ingredient in the proof of Theorem~\ref{t:main}: the contact
Ozsv\'ath--Szab\'o invariant. We first recall the basic facts of
Heegaard Floer theory and a result from~\cite{OSzIII}, which gives a
non--vanishing criterion for the contact Ozsv\'ath--Szab\'o
invariant. Then, after some preparatory material, we state and prove
Theorem~\ref{t:magic}, which gives a new method to apply the
non--vanishing criterion.

Heegaard Floer theory~\cite{OSzF1, OSzF2, OSzF4, OSz6} associates a finitely generated 
abelian group $\hf (Y, \t)$, the \emph{Ozsv\'ath--Szab\'o homology group}, to a closed, oriented spin$^c$ $3$--manifold $(Y, \t)$. Throughout this paper we will assume that $\Z/2\Z$ coefficients are being used in the complexes defining the $\widehat{HF}$--groups. With this assumption, the groups are actually $\Z/2\Z$--vector spaces.  The symbol $\hf (Y)$ will denote the direct sum of $\hf (Y, \t )$ for all spin$^c$
structures. A fundamental property of these groups is that on each 3--manifold there are
only finitely many spin$^c$ structures  with
non--trivial Ozsv\'ath--Szab\'o homology group, hence $\hf (Y)$ is also
finitely generated.  By \cite[Proposition~5.1]{OSzF2} a rational
homology sphere $Y$ has non--trivial Ozsv\'ath--Szab\'o homology group
$\hf (Y, \t )$ for each spin$^c$ structure $\t\in Spin ^c (Y)$.  In
particular, for a rational homology 3--sphere $Y$ we have
\[
\dim \hf (Y)\geq  \vert Spin^c (Y)\vert = \vert H_1(Y; \Z )\vert.  
\]
A rational homology 3--sphere $Y$ is called an~\emph{$L$--space} if
\[
\dim \hf (Y)= \vert H_1(Y; \Z )\vert.
\]
In view of the above nonvanishing result, this property is equivalent to 
\[
\hf (Y ,\t )=\Z /2\Z\quad\text{for each $\t\in\Spin^c(Y)$}.
\]
Recall that a cooriented contact structure $\xi$ on an oriented
3--manifold $Y$ determines a spin$^c$ structure $\t _{\xi }$ on $Y$
and, viewing $\xi$ as an oriented 2--plane bundle we have
$c_1(\xi)=c_1(\t_\xi)$. In~\cite{OSz6} Ozsv\'ath and Szab\'o define an
invariant, the~\emph{contact Ozsv\'ath--Szab\'o invariant}
\[
c(Y, \xi)\in \hf (-Y,\t_{\xi })
\]
assigned to a positive, cooriented contact structure $\xi$ on $Y$. A
basic property of this invariant is that if $(Y, \xi )$ is overtwisted
then $c(Y, \xi )=0$, and if $(Y, \xi )$ is Stein fillable then $c(Y, \xi
)\neq 0$.  In particular, for the standard contact structure $(S^3,
\xi _{st})$ the invariant $c(S^3, \xi _{st})\in \hf (S^3)=\Z /2\Z$ is
non--zero. Moreover, if $(Y_2,\xi _2)$ is given as Legendrian surgery
along a Legendrian knot in $(Y_1, \xi _1)$ and $c(Y_1, \xi _1)\neq 0$
then $c(Y_2, \xi _2)\neq 0$; in particular, $(Y_2, \xi _2)$ is
tight~\cite{LS3, OSz6}.

It was proved in~\cite{OSzF1} that if $\t \in Spin ^c(Y)$ and $c_1(\t
)\in H^2 (Y; \Z )$ is a torsion element then the Ozsv\'ath--Szab\'o
homology group $\hf (Y, \t )$ comes with a natural relative
$\Z$--grading. Moreover, this relative $\Z$--grading admits a natural
lift to an absolute $\Q$--grading~\cite{OSzabs}.  Thus, when $c_1(\t)$
is torsion the Ozsv\'ath--Szab\'o homology group $\hf (Y, \t)$ splits
as
\[
\hf (Y, \t )=\oplus _{n \in  \Z}\hf _{d_0+n} (Y, \t), 
\]
where the degree $d_0\in\Q$ is determined mod 1 by $\t$. To a rational
homology 3--sphere $Y$ endowed with a spin$^c$ structure $\t$ an
invariant $d(Y, \t)\in\Q$ is associated, called the~\emph{correction
term}~\cite{OSzabs}, which satisfies $d(-Y,\t)=-d(Y,\t)$ and
\[
\hf _{d(Y,\t)} (Y, \t)\neq 0.
\]
If $Y$ is an $L$--space $Y$ then $\hf (Y, \t)=\bfz _2$, therefore in this simple case  $d(Y, \t)$ is characterized
as the unique degree of a nontrivial element in $\hf (Y, \t)$. 

Given a cooriented 2--plane field $\xi$ on the oriented 3--manifold $Y$,  if $c_1(\xi )\in H^2 (Y; \Z )$ 
is torsion, $(X,J)$ is an almost complex 4--manifold such that 
$\del X =Y$ and $\xi$ is equal to the distribution of complex tangent lines to $\del X$, the rational number 
\[
d_3(\xi):= \frac{1}{4}(c_1^2(X, J)- 3\sigma (X) -2 b_2(X))\in\Q
\]
depends only on $\xi$ up to homotopy, and not on the choice of
$(X,J)$, see~\cite{Gompf}.  The degree of the contact invariant $c(Y,
\xi )\in \hf (-Y, \t _{\xi })$ is known to be equal to $-d_3(\xi )$.
Consequently, when $Y$ is an $L$--space it easily follows from
$c(Y,\xi)\neq 0$ that $d_3(\xi)=d(Y,\t_\xi)$.  In some cases the
converse also holds:

\begin{thm}[\cite{OSzIII}, Theorem~1.2]\label{t:char}
Let $Y=Y(-1;r_1,r_2,r_3)$, with $e(Y)>0$. Let $\xi$ be a contact structure on $Y$ 
given by a contact surgery diagrams as in Figure~\ref{f:OSzIII2}. Then, if $d_3(\xi )=d(Y,\t_\xi)$ 
we have $c(Y, \xi )\neq 0$ and, in particular, $\xi$ is tight.\qed
\end{thm}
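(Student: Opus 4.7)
The plan is to track the contact invariant $c(Y,\xi)\in\hf(-Y,\t_\xi)$ through the contact surgery presentation of Figure~\ref{f:OSzIII2}, and to exploit the grading identity $\deg c(Y,\xi)=-d_3(\xi)$ together with the hypothesis $d_3(\xi)=d(Y,\t_\xi)=-d(-Y,\t_\xi)$, which pins $c(Y,\xi)$ at the bottom degree of $\hf(-Y,\t_\xi)$ whenever it is nonzero.

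First, following the conversion of~\cite{DG2, DGS}, I would rewrite the given contact surgery as a sequence of Legendrian $(-1)$-surgeries together with a number of contact $(+1)$-surgeries on a Legendrian link $L\subset(S^3,\xi_{st})$. Performing the Legendrian $(-1)$-surgeries first produces a Stein fillable intermediate contact manifold $(Y_0,\xi_0)$ with $c(Y_0,\xi_0)\neq 0$. The remaining contact $(+1)$-surgeries are inserted one at a time using the Ozsv\'ath--Szab\'o surgery exact triangle, whose connecting cobordism maps carry contact invariants to contact invariants~\cite{LS3, OSz6}. This yields an explicit graded map $\Phi$ between the relevant Floer groups satisfying $\Phi(c(Y_0,\xi_0))=c(Y,\xi)$.

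Next I would invoke the plumbing description. Since $e(Y)>0$, the dual plumbing $W_{\Ga'}$ of Figure~\ref{f:generaldualtree} is negative definite, so the Ozsv\'ath--Szab\'o algorithm for plumbed rational homology spheres computes $d(-Y,\t_\xi)$ combinatorially and exhibits nontrivial generators of the bottom graded piece $\hf_{d(-Y,\t_\xi)}(-Y,\t_\xi)$ in terms of characteristic cohomology classes on $W_{\Ga'}$. Because $\Phi$ preserves the absolute $\Q$-grading and $c(Y_0,\xi_0)$ is tracked at grading $d(-Y,\t_\xi)$, the image $\Phi(c(Y_0,\xi_0))$ must lie in this bottom piece, and the problem reduces to showing it is nonzero there.

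The main obstacle is ruling out cancellation inside the surgery exact triangles: the contact $(+1)$-surgery cobordism has a nontrivial kernel in general, and one must show that the Stein fillable class $c(Y_0,\xi_0)$ avoids this kernel. The argument hinges on comparing the bottom grading of the kernel, computed from the plumbing, with the grading of $c(Y_0,\xi_0)$. The hypothesis $d_3(\xi)=d(Y,\t_\xi)$ is used to ensure that the kernel's minimum grading lies strictly above $d(-Y,\t_\xi)$, so that a class of precisely this degree cannot be annihilated. Combining these steps delivers $c(Y,\xi)\neq 0$ and hence the tightness of $\xi$.
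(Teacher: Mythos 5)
A preliminary remark on scope: the paper itself gives \emph{no} proof of Theorem~\ref{t:char} --- the statement is imported verbatim from \cite{OSzIII} (Theorem~1.2 there) and closed with an immediate \qedsymbol; everything in Section~\ref{s:HF} of this paper is devoted to \emph{verifying the hypothesis} $d_3(\xi)=d(Y,\t_\xi)$, not to proving the implication itself. So your proposal has to be measured against the argument in \cite{OSzIII}, whose general architecture you have correctly reproduced: convert Figure~\ref{f:OSzIII2} into Legendrian surgeries plus contact $(+1)$--surgery, write $c(Y,\xi)$ as the image of the non--zero invariant of a Stein fillable $(Y_0,\xi_0)$ under the (degree--shifting, not degree--preserving) cobordism map, and compare degrees against the plumbing computation on $W_{\Ga'}$.

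The gap is in the one step that carries all the weight. The degree of $c(Y_0,\xi_0)$ in $\hf(-Y_0)$ and the degree shift of the map $\Phi$ are determined by the surgery diagram alone; they are exactly the same numbers whether or not $d_3(\xi)=d(Y,\t_\xi)$ holds. The hypothesis is a relation between the \emph{target} degree $-d_3(\xi)$ and $d(-Y,\t_\xi)$, whereas the kernel of $\Phi$ is a graded subspace of $\hf(-Y_0)$; you give no mechanism by which the hypothesis, or the plumbing description of $-Y$, controls that kernel, so the assertion that ``the kernel's minimum grading lies strictly above $d(-Y,\t_\xi)$'' does not even typecheck without first fixing the degree shift, and once it is fixed the claim is independent of the hypothesis. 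What your grading bookkeeping genuinely proves is only the easy converse direction: when $Y$ is an $L$--space (which, incidentally, is not part of the stated hypotheses and must be supplied, e.g.\ via \cite[Theorem~1.1]{OSzIII}), the group $\hf(-Y,\t_\xi)$ is one--dimensional and concentrated in degree $d(-Y,\t_\xi)$, so if $d_3(\xi)\neq d(Y,\t_\xi)$ the image lands in a trivial graded piece and vanishes. When the degrees do match, the image lands in the right one--dimensional piece but could still be zero --- ruling this out is the entire content of \cite[Theorem~1.2]{OSzIII}, and it requires input beyond grading (exactness and rank arguments in the surgery triangle, together with an explicit analysis of the cobordism map in the relevant spin$^c$ structure). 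As written, ``a class of precisely this degree cannot be annihilated'' assumes the conclusion.
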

\begin{figure}[ht]
\begin{center}
\psfrag{r1}{$-\frac1{r_1}$}
\psfrag{r2}{$-\frac1{r_2}$}
\psfrag{r3}{$-\frac1{r_3}$}
\psfrag{+1}{$+1$}
\epsfig{file=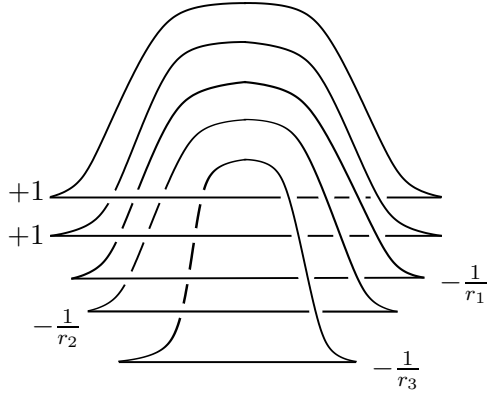, height=5cm}
\end{center}
\caption{Contact structures on $Y(-1;r_1,r_2,r_3)$}
\label{f:OSzIII2}
\end{figure}
To understand the statement of Theorem~\ref{t:char} it is important to
keep in mind that, since the rational numbers $-\frac 1{r_i}$ are not
necessarily of the form $\frac 1k$, $k\in\Z$, the contact surgeries
they determine are not unique (see
Section~\ref{s:seifert}). Therefore, for fixed $r_1$, $r_2$ and $r_3$,
Figure~\ref{f:OSzIII2} defines a finite collection of contact
structures on the same underlying topological 3--manifold
$Y(-1;r_1,r_2,r_3)$. See~\cite{OSzII, OSzIII} for more details and
explicit examples.

In~\cite{OSzplum} Ozsv\'ath and Szab\'o prove the existence of an
algorithm which computes $\hf(Y)$ assuming that $Y$ is the boundary of
a negative definite plumbing of a certain type. In
Sections~\ref{s:firstappl} and~\ref{s:secondappl} we will use the
Ozsv\'ath--Szab\'o algorithm to apply Theorem~\ref{t:char}. In order
to state our next result we need to recall the main ingredient of the
algorithm, that is the definition of~\emph{full path}.

Suppose that $\TT$ is a plumbing tree of spheres, $W_\TT$ is the
corresponding 4--manifold and $Y_\TT = \del W_\TT$. A vertex $v$ of
$\TT$ is~\emph{bad} if its valency is larger than the absolute value
of its weight. The algorithm exists assuming that $\TT$ is negative
definite and has at most one bad vertex. Observe that such assumptions
are satisfied if $\TT$ is equal to a weighted tree $\Ga'$ as in
Figure~\ref{f:specialdualtree}. Fix an identification of the set of
vertices of $\TT$ with a set of standard generators of
$H_2(W_\TT;\Z)$, so that $\TT$ coincides with the corresponding
intersection graph. An $m$--tuple $(K_1, \ldots ,K_m)$ of second
cohomology elements on $W_\TT$ is said to be a~\emph{full path} if
\begin{itemize}
\item 
each $K_i\in H^2 (W_\TT; \bfz )$ is a characteristic element, that is, 
\[
\langle K_i, v\rangle \equiv v\cdot v \quad \bmod 2\quad\text{for
every $v\in\TT$};
\]
\item 
$K_1$ is an \emph{initial vector}, that is, it satisfies
\[
v\cdot v + 2\leq \langle K_1, v\rangle\leq -v\cdot v\quad\text{for every $v\in\TT$};
\]
\item 
For $i=1, \ldots , m-1$ the vector $K_{i+1}$ is given by 
\[
K_{i+1} = K_i+2PD(v)
\]
for some $v\in\TT$ satisfying $\langle K_i, v\rangle = -v\cdot v$;
\item 
$K_m$ is a \emph{terminal vector}, that is
\[
v\cdot v\leq \langle K_m, v\rangle\leq -v\cdot v - 2 \quad\text{for every $v\in\TT$}.
\]
\end{itemize}
Notice that the length $m$ of a full path might vary. For example, if
$v\cdot v < -1$ for every $v$ then there is a vector $K$ which is both initial and terminal:
\[
\langle K, v\rangle := 
\begin{cases}
0\quad\text{if $v\cdot v$ is even},\\
1 \quad\text{if $v\cdot v$ is odd}.
\end{cases}
\]
Therefore in this case there is always a full path with $m=1$.
According to~\cite{OSzplum}, a full path $(K_1, \ldots , K_m)$ determines a non--trivial element of 
$\hf(Y_\TT)$ whose absolute degree can be computed using the formula
\begin{equation}\label{e:degree-formula}
\frac{1}{4}(K_i^2 +\vert \TT\vert ),
\end{equation}
where $K_i$ is any element of the full path. Notice that the
transformation rule defining $K_{i+1}$ from $K_i$ implies that
$K_i^2=K_{i+1}^2$, therefore the choice of the vector in the full path is
irrelevant when computing the degree. 

The construction given in the following lemma will be crucial in Sections~\ref{s:firstappl} and~\ref{s:secondappl}.

\begin{lem}\label{l:ratsur}
Let $Y=Y(-1;r_1,r_2,r_3)=\del W_\Ga$ with $\Ga$ as in
Figure~\ref{f:generaltree}.  Then, there exist a smooth, closed and
oriented 4--manifold $R$ containing $Y$ as a smooth hypersurface and
an open tubular neighborhood $\nu(Y)\subset R$ such that
\[
R\setminus\nu(Y)=W_\Ga \cup W_{\Ga'}.
\] 
Moreover, $R$ is orientation preserving diffeomorphic to a blown--up
complex projective plane.
\end{lem}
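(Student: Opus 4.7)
The plan is to construct $R$ explicitly as an iterated blow-up of $\mathbb{CP}^2$ containing two disjoint configurations $\mathcal{C}$ and $\mathcal{C}'$ of smooth rational curves, realizing the plumbing trees $\Gamma$ and $\Gamma'$ respectively, such that their regular neighborhoods are precisely $W_\Gamma$ and $W_{\Gamma'}$. Once this is achieved, taking $\nu(Y)$ to be a thin open collar separating $\mathcal{C}$ from $\mathcal{C}'$ yields the desired decomposition $R \setminus \nu(Y) = W_\Gamma \cup W_{\Gamma'}$, while $R$ is a blow-up of $\mathbb{CP}^2$ by construction.

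To build $\mathcal{C}$, I start with the following configuration in $\mathbb{CP}^2$: three lines $\ell_1, \ell_2, \ell_3$ through a common point $q$, together with a fourth line $\ell$ in general position meeting $\ell_i$ at a point $p_i$ distinct from $q$ for each $i$. Blowing up $q$ produces an exceptional divisor $E$ of self-intersection $-1$ meeting the proper transforms $\tilde{\ell}_i$ transversely; this $E$ will serve as the central $(-1)$-vertex of $\Gamma$. For each $i = 1, 2, 3$ I then perform a sequence of blow-ups starting at $p_i$ and iterating along subsequent proper transforms, as dictated by the Hirzebruch--Jung algorithm applied to the continued fraction $1/r_i = [a_1^{(i)}, \ldots, a_{n_i}^{(i)}]$. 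Among the curves so produced, the chain attached to $E$ on the $i$-th side has successive self-intersections $-a_1^{(i)}, \ldots, -a_{n_i}^{(i)}$, realizing the $i$-th leg $L_i$ of $\Gamma$; together with $E$, these three chains form $\mathcal{C}$.

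The decisive point is that the curves produced during the construction which are not absorbed into $\mathcal{C}$ assemble into the dual configuration $\mathcal{C}'$. Specifically, the proper transform $\tilde{\ell}$ of the fourth line has self-intersection $-2$ after the three points $p_1, p_2, p_3$ on it have been blown up, and it serves as the central $(-2)$-vertex of $\Gamma'$. The remaining exceptional divisors along each branch form chains attached to $\tilde{\ell}$ whose self-intersections are determined by the continued fraction expansion of $1/(1-r_i)$ via Riemenschneider's point rule, realizing the dual legs $L'_i$ of $\Gamma'$. Checking that $\mathcal{C}$ and $\mathcal{C}'$ are disjoint, have intersection graphs $\Gamma$ and $\Gamma'$ respectively, and that their regular neighborhoods are $W_\Gamma$ and $W_{\Gamma'}$ is then a routine check once the blow-up sequence has been chosen carefully.

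The main technical obstacle is the combinatorial bookkeeping: one must verify that after applying the Hirzebruch--Jung blow-up algorithm along each $\ell_i$, the ``leftover'' curves do form the dual chain $L'_i$ with the correct self-intersections. In practice it may be cleanest to proceed by induction on $n_1 + n_2 + n_3$, with the base case $n_1 = n_2 = n_3 = 1$ handled by direct inspection (essentially a small homology calculation in $\mathbb{CP}^2 \# 7 \overline{\mathbb{CP}^2}$) and the inductive step given by a single additional blow-up which simultaneously extends one leg of $\Gamma$ by a vertex and modifies the corresponding dual leg of $\Gamma'$, in accordance with Riemenschneider's point rule.
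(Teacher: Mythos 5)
Your proposal is correct and follows essentially the same route as the paper: both start from three concurrent lines plus a fourth general line in $\CP^2$, blow up the common point to create the central $(-1)$--vertex of $\Ga$, and then iterate blow--ups at the intersections of the fourth line with the three branches so that each chain joining the two central curves splits into a leg of $\Ga$ and the Riemenschneider--dual leg of $\Ga'$. The paper leaves the combinatorial verification as an "easy check," whereas you flesh it out via Hirzebruch--Jung and a suggested induction, but the construction of $R$ and the decomposition $R\setminus\nu(Y)=W_\Ga\cup W_{\Ga'}$ are the same.
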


\begin{proof}
Start by blowing up the complex projective plane $\CP^2$ at the common
intersection point $p$ of three distinct lines $\ell_1$, $\ell_2$ and
$\ell_3$. The union of the exceptional class, the proper transforms
$\tilde\ell_i$ of the $\ell_i$, $i=1,2,3$, and the proper transform
$\tilde\ell$ of a line $\ell\subset\CP^2$ with $p\not\in\ell$ provides
a configuration of five rational curves inside the rational surface
$\CP^2\#\overline\CP^2$. We can now blow up more times, starting at
the three intersection points $\tilde\ell \cap\tilde\ell_i$,
$i=1,2,3$, until we obtain a configuration of rational curves in the
blown--up projective plane with intersection graph identical to
$\Ga$. This gives an embedding of the plumbing $W_\Ga$ into a blown-up
$\CP^2$, which we take as our $R$. Then it is easy to check that
$Y=\del W_\Ga$ embeds as a hypersurface in $R$ and it has an open
tubular neighborhood $\nu(Y)$ such that $R\setminus\nu(Y) = W_\Ga\cup
W_{\Ga'}$.
\end{proof} 

Recall that, given a contact structure $\xi$ on a Seifert fibered 3--manifold $Y$, 
$\xi$ is called a~\emph{transverse contact structure} if it can be isotoped until it is transverse 
everywhere to the fibers of the Seifert fibration on $Y$. Since transverse contact structures are symplectically fillable~\cite{LiMa} and therefore tight, the existence question for Seifert 
fibered 3--manifolds is only open in the absence of transverse contact structures. 
The following statement gives a practical criterion for the existence of tight contact structures on Seifert fibered 3--manifolds which do not carry transverse contact structures. It will be applied in Sections~\ref{s:firstappl} and~\ref{s:secondappl}.

\begin{thm}\label{t:magic}
Let $Y=Y(-1;r_1,r_2,r_3)=\del W_\Ga$ with $\Ga$ as in Figure~\ref{f:generaltree} and $e(Y)>0$. 
Suppose that $Y$ does not carry transverse contact structures, and let $\xi$ be a contact structure 
on $Y$ given by a surgery diagram as in Figure~\ref{f:OSzIII2}. 
Let $R$ be a smooth, closed 4--manifold containing $Y$ as in Lemma~\ref{l:ratsur}, and  
let $c\in H^2 (R; \bfz)$ be a characteristic cohomology class such 
that:
\begin{itemize}
\item
$d_3(\xi) = \frac{1}{4}((c|_{W_\Ga})^2-3\si (W_\Ga)-2 b_2 (W_\Ga))+1$;
\item
$c\vert _{W_{\Ga'}}$ belongs to a full path on $W_{\Ga'}$;
\item 
$c^2=\si(R)$.
\end{itemize}
Then, $c(Y,\xi)\neq 0$ and hence $\xi$ is tight.
\end{thm}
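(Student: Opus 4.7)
The plan is to verify the hypothesis of Theorem~\ref{t:char}, namely $d_3(\xi)=d(Y,\t_\xi)$, from which $c(Y,\xi)\ne 0$ and thus the tightness of $\xi$ will follow. Since $Y$ is a small Seifert fibered rational homology sphere with $e_0(Y)=-1$ and $e(Y)>0$ which does not carry a transverse contact structure, earlier work connecting the non--existence of transverse contact structures with the $L$--space property on this class of 3--manifolds gives that both $Y$ and $-Y=\del W_{\Ga'}$ are $L$--spaces. In particular, for every $\t\in\Spin^c(-Y)$ the group $\hf(-Y,\t)$ is one--dimensional over $\Z/2\Z$ and concentrated in degree $d(-Y,\t)=-d(Y,\t)$.

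By the second hypothesis and the Ozsv\'ath--Szab\'o plumbing algorithm~\cite{OSzplum}, the class $c|_{W_{\Ga'}}$ determines a nonzero element of $\hf(-Y,\t_{c|_{W_{\Ga'}}|_{-Y}})$ whose absolute degree, given by formula~\eqref{e:degree-formula}, equals $\frac{1}{4}\bigl((c|_{W_{\Ga'}})^2+b_2(W_{\Ga'})\bigr)$. The spin$^c$ structure $\t_{c|_{W_{\Ga'}}|_{-Y}}$ agrees with $\t_\xi$: both are obtained by restricting to $Y$ the spin$^c$ structure on $R$ with first Chern class $c$, and the first hypothesis is the $d_3$--formula applied on $W_\Ga$---the $+1$ correction accounting for the single contact $(+1)$--surgery along the central knot in Figure~\ref{f:OSzIII2}---which forces the restriction of $c$ to $Y$ to represent $c_1(\t_\xi)$ up to conjugation (immaterial for correction terms). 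Combining with the $L$--space property yields
\[
d(Y,\t_\xi)=-\frac{1}{4}\bigl((c|_{W_{\Ga'}})^2+b_2(W_{\Ga'})\bigr).
\]

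Multiplying by $4$ and using the first hypothesis, the desired equality $d_3(\xi)=d(Y,\t_\xi)$ reduces to
\[
(c|_{W_\Ga})^2+(c|_{W_{\Ga'}})^2 \;=\; 3\si(W_\Ga)+2b_2(W_\Ga)-b_2(W_{\Ga'})-4,
\]
which is checked as follows. Since $Y$ is a rational homology sphere, $c^2=(c|_{W_\Ga})^2+(c|_{W_{\Ga'}})^2$; by Novikov additivity, $\si(R)=\si(W_\Ga)+\si(W_{\Ga'})$; the third hypothesis equates these two expressions. Because $e(Y)>0$, by~\cite{NR} one has $b_2^+(W_\Ga)=1$, so $\si(W_\Ga)=2-b_2(W_\Ga)$; and because the dual plumbing $W_{\Ga'}$ is negative definite, $\si(W_{\Ga'})=-b_2(W_{\Ga'})$. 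Substituting, both sides of the displayed identity collapse to $2-b_2(W_\Ga)-b_2(W_{\Ga'})$, as required, and Theorem~\ref{t:char} then finishes the proof. The main obstacle is the $L$--space reduction in the first paragraph, which depends on the nontrivial prior correspondence between the absence of transverse contact structures and the $L$--space property for this class of 3--manifolds; once that is granted, the argument is essentially an arithmetic check combined with careful tracking of spin$^c$ structures.
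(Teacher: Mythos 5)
Your proposal is correct and follows essentially the same route as the paper's proof: reduce to the equality $d_3(\xi)=d(Y,\t_\xi)$ via Theorem~\ref{t:char}, deduce the $L$--space property from the absence of transverse contact structures (the paper cites that $-Y$ carries transverse structures since $e(-Y)<0$ and then invokes \cite[Theorem~1.1]{OSzIII}), read off $d(-Y,\t_\xi)$ from the full--path degree formula, and combine with the $d_3$--formula and $c^2=\si(R)$ using the orthogonal splitting over the rational homology sphere $Y$ together with $b_2^+(W_\Ga)=1$ and the negative definiteness of $W_{\Ga'}$. Your arithmetic rearrangement is equivalent to the paper's addition of Equations~\eqref{e:d1} and~\eqref{e:d2}, and your explicit (if brief) remark on matching the spin$^c$ structures addresses a point the paper leaves implicit.
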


\begin{proof}
By Theorem~\ref{t:char} it suffices to check the equality $d_3(\xi
)=d(Y,\t_{\xi })$.  Since $e(Y)>0$, $Y$ is a rational homology
sphere. Therefore we have the splitting
\[
H^2(R; \bfq )\cong H^2 (W_\Ga; \bfq )\oplus H^2 (W_{\Ga'}; \bfq )\ni
c_\Ga+c_{\Ga'}=c,
\]
where $c_\Ga$ and $c_{\Ga'}$ abbreviates $c\vert _{W_{\Ga}}$ and
$c\vert _{W_{\Ga'}}$, respectively.  By a simple computation using the
fact that $b_2^+(W_\Ga)=1$ and the first assumption on $c$ we have

\begin{equation}\label{e:d1}
d_3(\xi )=\frac{1}{4}(c_{\Ga}^2-3\sigma (W_\Ga)-
2 b_2 (W_\Ga))+1=\frac{1}{4}(c^2_\Ga-\sigma (W_\Ga)).
\end{equation}
By the second assumption on $c$, the restriction $c _{\Ga'}$ determines a non--trivial element 
of $\hf(-Y,\t_\xi)$ which, by~\eqref{e:degree-formula}, has degree
\[
\frac{1}{4}(c_{\Ga'}^2+|\Ga'|) = \frac{1}{4}(c_{\Ga'}^2+b_2(W_{\Ga'})).
\]
Now observe that by assumption $Y$ has no transverse contact structures while,  
since $e(-Y)=-e(Y)<0$, $-Y$ does carry contact structures transverse to the Seifert fibration~\cite[Proposition~3.1]{LiMa}. Applying~\cite[Theorem~1.1]{OSzIII} 
gives that $Y$ is an $L$--space, therefore
\begin{equation}\label{e:d2}
d(-Y, \t_\xi)=\frac{1}{4}(c_{\Ga'}^2+b_2(W_{\Ga'}))=
\frac{1}{4}(c^2_{\Ga'}-\sigma (W_{\Ga'})).
\end{equation}
Adding Equations~\eqref{e:d1} and~\eqref{e:d2} and using the third assumption we get
\begin{equation}\label{e:d3}
d_3(\xi )+d(-Y, \t_\xi)=\frac{1}{4}(c^2-\si (R))=0.
\end{equation}
Since $d(Y,\t _\xi)=-d(-Y, \t_\xi)$, Identity~\eqref{e:d3} implies $d_3(\xi)=d(Y, \t_\xi)$, and 
the tightness of $\xi$ follows applying Theorem~\ref{t:char}.
\end{proof}

\section{First application of the criterion}
\label{s:firstappl}

In this section we apply Theorem~\ref{t:magic} to prove the following statement:

\begin{thm}\label{t:n1>1}
Let $Y = Y(-1;r_1,r_2,r_3)$ with $e(Y) > 0$, and suppose that $Y$ carries no transverse 
contact structures. Suppose that $1>r_1\geq r_2\geq r_3>0$ satisfy Expansions~\eqref{e:cfracs},  
and each of the following holds:
\begin{itemize}
\item
$a_1=\cdots=a_k=2$ and either 
\begin{itemize}
\item
$n_1=k$ or 
\item
$n_1>k$ and $a_{k+1}>2$;
\end{itemize}
\item
$c_1\geq b_1=k+2$;
\item
$n_1>1$;
\item
$n_3=1$.
\end{itemize}
Then, $Y$ carries a contact structure $\xi$ given by a surgery diagram as in Figure~\ref{f:OSzIII2} and such that $c(Y,\xi)\neq 0$. In particular, $Y$ carries a tight contact structure. 
\end{thm}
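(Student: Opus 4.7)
The plan is to invoke Theorem~\ref{t:magic}. For each admissible triple $(r_1,r_2,r_3)$ I would produce a specific contact structure $\xi$ given by a surgery diagram as in Figure~\ref{f:OSzIII2}, the 4--manifold $R$ (a blown--up $\CP^2$) provided by Lemma~\ref{l:ratsur}, and an explicit characteristic class $c \in H^2(R;\Z)$ verifying the three bullet--point conditions of that theorem. Once such a triple $(\xi, R, c)$ is in hand, Theorem~\ref{t:magic} immediately delivers $c(Y,\xi) \neq 0$ and hence the tightness of $\xi$.

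First I would turn Figure~\ref{f:OSzIII2} into a Legendrian surgery presentation via the algorithm of~\cite{DG2, DGS}, replacing each $-\frac{1}{r_i}$--labeled unknot by a chain of contact $(-1)$--surgeries along Legendrian unknots, while the central $(-1)$--labeled vertex carries the unique contact $(+1)$--surgery. The rotation numbers of the Legendrian components are free parameters, and from them a Gompf/Ding--Geiges--Stipsicz formula computes $d_3(\xi)$. Next I would describe $R\cong\CP^2\#N\cpkk$ concretely from the construction of Lemma~\ref{l:ratsur}, so that $\si(R)=1-N$. Since $e(Y)>0$ makes $Y$ a rational homology sphere, $H^2(R;\Q)\cong H^2(W_\Ga;\Q)\oplus H^2(W_{\Ga'};\Q)$, and I would specify $c$ by prescribing its pairings $\langle c,v\rangle$ on each vertex of $\Ga$ and of $\Ga'$.

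Under the hypotheses the dual tree $\Ga'$ of Figure~\ref{f:specialdualtree} is highly constrained: the assumptions $n_3=1$ and $c_1\geq k+2$ force the entire leg $L'_3$ to be a chain of $c_1-1$ vertices of weight $-2$, and the central vertex together with the first $k$ vertices of $L'_2$ also carry weight $-2$. The characteristic plus both initial and terminal conditions force $\langle c|_{W_{\Ga'}},v\rangle=0$ on every such $(-2)$--vertex, so I would set $c$ to vanish on all of these and arrange $c|_{W_{\Ga'}}$ to be a single vector which is simultaneously initial and terminal, yielding a full path of length $m=1$. The remaining freedom lies on the vertices of $L'_1$ (whose leading weight $-a'_1\in\{-(k+1),-(k+2)\}$ permits nontrivial evaluations) and on the tail of $L'_2$ past its initial string of $(-2)$--vertices. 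With $c|_{W_{\Ga'}}$ fixed, the identity $c^2=\si(R)$ together with the characteristic condition on $\Ga$ pins down $(c|_{W_\Ga})^2$, which in turn dictates the required value of $d_3(\xi)$ through the first bullet of Theorem~\ref{t:magic}. The rotation numbers from the first step are then tuned so that the Gompf formula realizes this target; the hypotheses $n_1>1$ and $n_3=1$ ensure enough slack to do so, the former providing at least two Legendrian components on $L_1$ with independently adjustable rotation numbers, and the latter keeping $L'_3$ simple enough that a length--one full path exists.

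The main obstacle is showing that all three conditions of Theorem~\ref{t:magic} can be met simultaneously for every $(r_1,r_2,r_3)$ satisfying the hypotheses. The constraints are strongly coupled: the full path condition determines $c$ on a large portion of $W_{\Ga'}$, the identity $c^2=\si(R)$ ties $c|_{W_\Ga}$ to $c|_{W_{\Ga'}}$, and matching $d_3(\xi)$ to the first bullet requires rotation numbers respecting the Thurston--Bennequin inequalities on each Legendrian component arising from the contact surgery substitutions. Carrying this through will require an explicit description of $\Ga'$ via Riemenschneider's point rule together with a careful parity/bookkeeping analysis of the rotation number parameters, and this is where the delicate portion of the proof resides.
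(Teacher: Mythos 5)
Your overall strategy --- apply Theorem~\ref{t:magic} by exhibiting an explicit characteristic class $c$ on the closed manifold $R$ of Lemma~\ref{l:ratsur} together with a contact structure $\xi$ from Figure~\ref{f:OSzIII2} --- is exactly the paper's. The gap lies in the specific route you commit to, namely taking $c|_{W_{\Ga'}}$ to be the vector that vanishes on all $(-2)$--vertices, so that the full path has length $m=1$, and then ``tuning'' rotation numbers afterwards. The three conditions of Theorem~\ref{t:magic} are rigidly coupled: once you prescribe $c|_{W_{\Ga'}}$ and impose that $c$ is characteristic on $R\cong\CP^2\# N\cpkk$ with $c^2=\si(R)=1-N$, the restriction $c|_{W_\Ga}$ is essentially forced, and the first bullet then \emph{dictates} $d_3(\xi)$ and (through the proof of Theorem~\ref{t:magic}, which needs the full path to live in $\hf(-Y,\t_\xi)$) the spin$^c$ structure $\t_\xi$. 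But the rotation numbers in a Legendrian realization of Figure~\ref{f:OSzIII2} are not free parameters: each component has its rotation number bounded by a Bennequin--type inequality and constrained mod $2$, and in particular the components coming from the $(-2)$--weighted vertices (of which $L_1$ consists entirely for its first $k$ entries, since $a_1=\cdots=a_k=2$) admit essentially no freedom at all. There is no reason the $c|_{W_\Ga}$ forced by your choice on the dual side is realizable by such rotation numbers, and in general it is not; if the length--one full path were compatible with a realizable diagram, the hardest part of the paper's argument would be unnecessary.

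The paper resolves the coupling in the opposite order: it first writes down, via the sets $S$ and $T$ in~\eqref{e:magic}, a class $c$ whose restriction to $W_\Ga$ \emph{is} realized by the concrete diagram of Figure~\ref{f:pres1} (this is Lemma~\ref{l:square1}), and then pays the price on the dual side: the resulting $c|_{W_{\Ga'}}$ is neither initial (value $-2$ on $y_3'$) nor terminal (value $b=-v\cdot v$ on $v$), and the lengthy combinatorial Lemma~\ref{l:path1} is needed to show it nonetheless sits in the interior of a full path. So the content you defer to ``careful parity/bookkeeping'' is not bookkeeping at all --- it is the main point, and your proposed simplification (a length--one path) closes off the route by which the paper gets through it. To repair the proposal you would need either to verify directly that the both--initial--and--terminal vector corresponds to a spin$^c$ structure realized by some $\xi$ from Figure~\ref{f:OSzIII2} with the right $d_3$ (which fails in general), or to abandon $m=1$ and prove a full--path lemma for a realizable $c$, as the paper does.
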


\begin{figure}[ht]
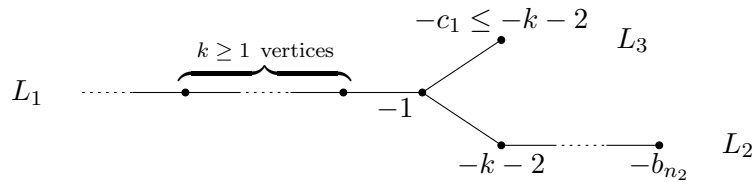

\begin{center}
\setlength{\unitlength}{1mm}
\unitlength=0.7cm
\begin{graph}(13,3.5)(-2,-1)
\graphnodesize{0.15}

\rectnode{m-1}[0,0](-1,1)
\rectnode{m0}[0,0](0,1)
\roundnode{m1}(1,1)
\rectnode{m2}[0,0](2,1)
\rectnode{m3}[0,0](3,1)
\roundnode{m4}(4,1)

\roundnode{m5}(5.5,1)

\roundnode{m6}(7,0)
\rectnode{m7}[0,0](8,0)
\rectnode{m8}[0,0](9,0)
\roundnode{m9}(10,0)

\roundnode{m10}(7,2)

\edge{m0}{m-1}[\graphlinedash{1 2}]
\edge{m0}{m1}
\edge{m1}{m2}
\edge{m2}{m3}[\graphlinedash{1 2}]
\edge{m3}{m4}
\edge{m4}{m5}
\edge{m5}{m6}
\edge{m6}{m7}
\edge{m7}{m8}[\graphlinedash{1 2}]
\edge{m8}{m9}

\edge{m5}{m10}

\autonodetext{m5}[sw]{$-1$}
\autonodetext{m6}[s]{$-k-2$}
\autonodetext{m9}[s]{$-b_{n_2}$}
\autonodetext{m10}[n]{$-c_1\leq -k-2$}

\freetext(2.5,1.6){$\overset{\text{$k\geq 1$ vertices}}{\overbrace{\hspace{65pt}}}$}
\freetext(-2,1){$L_1$}
\freetext(11.5,0){$L_2$}
\freetext(9.5,2){$L_3$}

\end{graph}
\end{center}
\caption{The tree $\Ga$ under the assumptions of Theorem~\ref{t:n1>1}}
\label{f:gr}
\end{figure}

Let $Y = \del W_\Ga$ be a 3--manifold as in the statement of Theorem~\ref{t:n1>1}. 
The corresponding tree $\Ga$ is illustrated in Figure~\ref{f:gr}, with $(-2)$--weights omitted. 
Therefore we have $-Y=\del W_{\Ga'}$, where the dual plumbing tree $\Ga'$ given in Figure~\ref{f:dugr}, 
with $(-2)$--weights omitted. Explicitely, we have:
\begin{itemize}
\item
The central vertex of $\Ga'$ has weigth $-2$.
\item
The leg $L_1'$ starts out with a vertex $w$ of weight $-e$. Moreover, 
\[
e=
\begin{cases}
k+1\ \text{if}\ l(L'_1) =1,\\
k+2\ \text{if}\ l(L'_1)>1.
\end{cases}
\]
\item
The leg $L_2'$ starts out with $k\geq 1$ vertices of weight $-2$, then has a
vertex $v$ of weight $-b\leq -2$ and then possibly more vertices. 
\item
The leg $L_3'$ has length $l(L_3')=c_1-1\geq k+1$, and each one of its vertices has
weight $-2$.
\end{itemize}
\begin{figure}[ht]
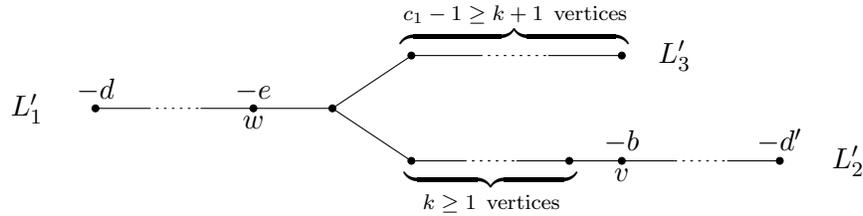

\begin{center}
\setlength{\unitlength}{1mm}
\unitlength=0.7cm
\begin{graph}(15,3.5)(0,-1)
\graphnodesize{0.15}

\roundnode{m1}(1,1)
\rectnode{m2}[0,0](2,1)
\rectnode{m3}[0,0](3,1)
\roundnode{m4}(4,1)

\roundnode{m5}(5.5,1)

\roundnode{m6}(7,0)
\rectnode{m7}[0,0](8,0)
\rectnode{m8}[0,0](9,0)
\roundnode{m9}(10,0)
\roundnode{m10}(11,0)
\rectnode{m11}[0,0](12,0)
\rectnode{m12}[0,0](13,0)
\roundnode{m13}(14,0)

\roundnode{m14}(7,2)
\rectnode{m15}[0,0](8.3,2)
\rectnode{m16}[0,0](9.6,2)
\roundnode{m17}(11,2)

\edge{m1}{m2}
\edge{m2}{m3}[\graphlinedash{1 2}]
\edge{m3}{m4}
\edge{m4}{m5}
\edge{m5}{m6}
\edge{m6}{m7}
\edge{m7}{m8}[\graphlinedash{1 2}]
\edge{m8}{m9}
\edge{m9}{m10}
\edge{m10}{m11}
\edge{m11}{m12}[\graphlinedash{1 2}]
\edge{m12}{m13}

\edge{m5}{m14}
\edge{m14}{m15}
\edge{m15}{m16}[\graphlinedash{1 2}]
\edge{m16}{m17}

\autonodetext{m1}[n]{$-d$}
\autonodetext{m4}[n]{$-e$}
\autonodetext{m4}[s]{$w$}
\autonodetext{m10}[n]{$-b$}
\autonodetext{m10}[s]{$v$}
\autonodetext{m13}[n]{$-d'$}

\freetext(8.5,-0.6){$\underset{\text{$k\geq 1$ vertices}}{\underbrace{\hspace{65pt}}}$}
\freetext(8.95,2.6){$\overset{\text{$c_1-1\geq k+1$ vertices}}{\overbrace{\hspace{86pt}}}$}

\freetext(-0.3,1){$L'_1$}
\freetext(15.3,0){$L'_2$}
\freetext(12,2){$L'_3$}

\end{graph}
\end{center}
\caption{\quad The dual tree $\Ga'$ in the case of Theorem~\ref{t:n1>1}}
\label{f:dugr}
\end{figure}
Observe that we always have $e\geq 3$. This is clear if $l(L'_1)>1$,
because $k\geq 1$. On the other hand, if $l(L'_1)=1$ then $n_1=k$, and
since we are assuming $n_1>1$ we have $e=k+1\geq 3$.  Moreover, if
$n_2=1$ then $b=2$ and $l(L'_2)=k+1$, while if $n_2>1$ then $b\geq 3$
and $l(L'_2)\geq k+1$.

{From} now on, we fix an identification of the set of vertices of
$\Ga$ and $\Ga'$ with, respectively, sets of generators for the second
integral homology of $W_{\Gamma}$ and $W_{\Gamma '}$, so that $\Ga$
and $\Ga'$ are the corresponding intersection graphs. Let $R$ be the
smooth 4--manifold of Lemma~\ref{l:ratsur}. We will now define a
characteristic cohomology class $c\in H^2(R;\Z)$ and a contact
structure $\xi$ on $Y$ via a contact surgery diagram as in
Figure~\ref{f:OSzIII2}, and then we will apply Theorem~\ref{t:magic}.
Denote by $h$ and $e_i$ standard generators of $H_2(R; \bfz )$, where
$h$ has square $+1$ and each $e_i$ has square $-1$.  It is easy to see
that under the map induced by the embedding $W_\Ga \cup
W_{\Ga'}\subset R$, up to renaming the $e_i$'s we have:
\begin{itemize}
\item
The central vertex of $\Ga$ goes to $e_1$,
\item
The central vertex of $\Ga'$ goes to $h-e_2-e_3-e_4$,
\item
The first vertex of each leg $L_i$, $i=1,2,3$, goes to a class of the form
$h-e_1-e_{i+1}-\sum_j e_j$.
\item
All the other vertices of $\Ga$ and $\Ga'$ go to classes of the form $e_j-\sum_k e_k$.
\end{itemize}
Denote by $z$ and $z'$, respectively, the central vertices of $\Ga$ and $\Ga'$. 
Let $x_i$, for $i=1,2,3$, be the first vertex of $L_i$,  that is the vertex closest to $z$, 
and by  $y'_i$, for $i=1,2,3$, the last vertex of $L'_i$, that is the vertex of $L'_i$ most distant 
from $z'$. Let $E=\{e_i\}\subset H_2(R;\Z)$ be the set of exceptional classes. Let  
\[
L'_2[k]:=\{\text{first $k$ vertices of $L'_2$}\},
\]
\[
S:=\{e_i\in E\ |\ e_i\cdot u = 0\ \text{for each}\ u\in L'_2[k]\ \text{and}\ 
e_i\cdot u\neq 0\ \text{for some}\ u\in L'_2\setminus L'_2[k]\},
\]
\[
T:=\{e_i\in E\ |\ e_i\cdot u\neq 0\ \text{for some}\ u\in
L_3'\setminus\{y'_3\} \},
\]
and define the class $c\in H^2 (R; \bfz )$ through its Poincar\'e dual by 
\begin{equation}\label{e:magic}
\PD(c) :=
h - \sum_{e\in E} e + 2\sum_{e\in S} e + 2\sum_{e\in T} e.
\end{equation}
Observe that by construction $S\cap T=\emptyset$, which gives $\langle c, e_i\rangle = \pm 1$ 
for every $e_i$. This immediately implies that $c$ is characteristic and $c^2 =\sigma (R)$. 
A tedious but straightforward verification shows that $c$ evaluates on the vertices of 
$\Ga$ and $\Ga'$ as follows:
\begin{itemize}
\item
$\langle c, z\rangle = +1$,
\item
$\langle c, x_1\rangle = x_1\cdot x_1 = -2$,
\item
if $u\in L_1\setminus\{x_1\}$ then $\langle c, u\rangle = u\cdot u + 2$,
\item
$\langle c, x_2\rangle = x_2\cdot x_2 + 2 = -k$,
\item
if $u\in L_2\setminus\{x_2\}$ then $\langle c, u\rangle = -u\cdot u -2$,
\item
$\langle c, x_3\rangle = -x_3\cdot x_3 - 2 = c_1-2$,
\item
$\langle c, z'\rangle = 0$,
\item
if $u\in L_1'$ then $\langle c, u\rangle = u\cdot u+ 2$,
\item
$\langle c, v\rangle = -v\cdot v= b$,
\item
if $u\in L'_2[k]$ then $\langle c, u\rangle = u\cdot u+ 2 = 0$,
\item
if $u\in L'_2\setminus(L'_2[k]\cup\{v\})$ then $\langle c, u\rangle = -u\cdot u- 2$, 
\item
$\langle c, y'_3\rangle = y'_3\cdot y'_3 = -2$,
\item
if $u\in L'_3\setminus\{y'_3\}$ then $\langle c, u\rangle = u\cdot u+ 2 = 0$.
\end{itemize}

Define $\xi$ to be the contact structure given by the contact
surgery diagram of Figure~\ref{f:pres1}. By~\cite{DG1,DG2}, this
diagram is equivalent to a contact surgery diagram as in
Figure~\ref{f:OSzIII2}. Moreover, after converting each contact
framing into a smooth framing, with a little Kirby calculus as
e.g.~in~\cite{OSzII} it is easy to check that the underlying
topological 3--manifold is $Y=\del W_\Ga$.
\begin{figure}[!ht]
\begin{center}
\psfrag{+1}{$+1$}
\psfrag{-1}{$-1$}
\psfrag{2k1}{\small $2k+1$}
\psfrag{dcu}{\small down cusps}
\psfrag{ucu}{\small up cusps}
\psfrag{2c1}{\small $2c_1-1$}
\psfrag{s1}{\tiny $-1$}
\psfrag{b22}{\tiny $2b_2-3$}
\psfrag{bn22}{\tiny $2b_{n_2}-3$}
\psfrag{sdc}{\tiny down cusps}
\psfrag{km1}{\tiny $k-1$}
\psfrag{ak12}{\tiny $2a_{k+1}-3$}
\psfrag{unk}{\tiny unknots}
\psfrag{an12}{\tiny $2a_{n_1}-3$}
\psfrag{suc}{\tiny up cusps}
\epsfig{file=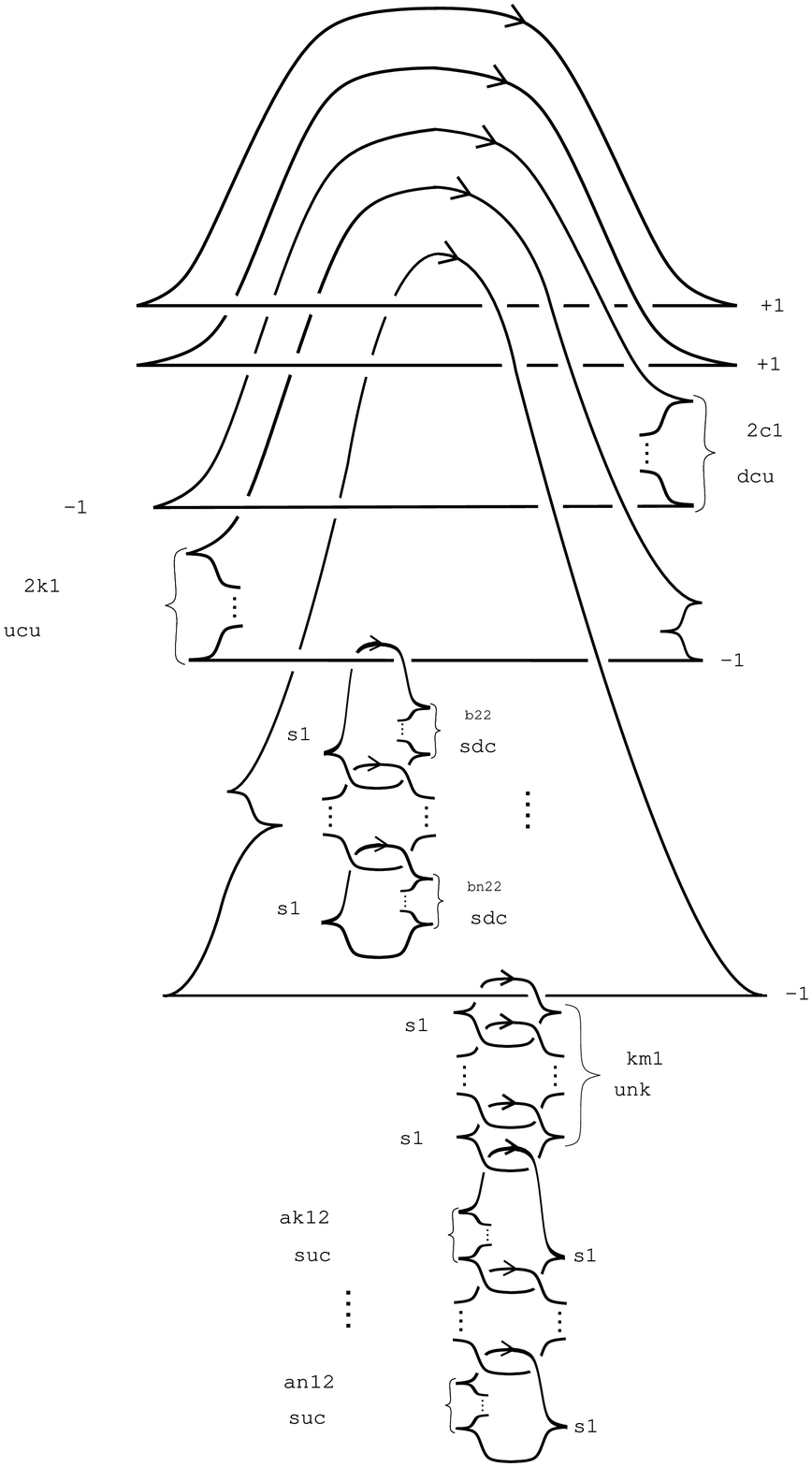, height=14cm}
\end{center}
\caption{The contact structure $\xi$ used in the proof of Theorem~\ref{t:n1>1}.}
\label{f:pres1}
\end{figure}

\begin{lem}\label{l:square1}
We have
\[
d_3(\xi) = \frac{1}{4}\left((c|_{W_{\Ga}})^2-3\si (W_{\Ga})-2 b_2(W_{\Ga})\right)+1.
\]
\end{lem}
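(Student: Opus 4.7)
The plan is to compute $d_3(\xi)$ directly from the Legendrian contact surgery presentation in Figure~\ref{f:pres1} via the Ding--Geiges--Stipsicz formula, and to identify the result with the right--hand side by comparing the rotation numbers of the Legendrian components with the values of $c$ on the vertices of $\Ga$ tabulated immediately before the statement of the lemma.

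First, let $X$ be the smooth 4--manifold obtained from $B^4$ by attaching 2--handles along the Legendrian link in Figure~\ref{f:pres1}, with topological framings $\tb\pm 1$ dictated by the contact surgery coefficients. Its boundary is $Y$, and the field of complex tangencies on $\partial X$ (for the natural almost complex structure built by adding Stein 2--handles along the $(-1)$--components and the extra $(+1)$--handle) is homotopic to $\xi$. The Ding--Geiges--Stipsicz formula~\cite{DGS}, in the paper's convention for $d_3$, with $p=1$ counting the single $(+1)$--contact surgery (at the central unknot) and with $c_\xi\in H^2(X;\Z)$ the characteristic class whose evaluation on the 2--handle dual of each Legendrian component $L$ equals $\rot(L)$, gives
\[
d_3(\xi) \;=\; \frac{c_\xi^2 - 3\si(X) - 2b_2(X)}{4} + 1.
\]

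Second, after converting contact framings into smooth ones, a standard Kirby--calculus argument of the type used in~\cite{OSzII} identifies $X$ with $W_\Ga$ compatibly with the assignment of $H_2$--generators to vertices of $\Ga$; in particular $\si(X)=\si(W_\Ga)$ and $b_2(X)=b_2(W_\Ga)$.

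Third---and this is the heart of the argument---a careful count of up and down cusps on each Legendrian component prescribed by Figure~\ref{f:pres1} yields its rotation number, and these match, vertex by vertex, the evaluations of $c$ recorded just before the lemma: namely $\langle c,z\rangle = +1$, $\langle c, x_1\rangle = -2$, $\langle c, x_2\rangle = -k$, $\langle c, x_3\rangle = c_1 - 2$, and $\langle c, u\rangle = \pm(u\cdot u + 2)$ on the remaining vertices according to which leg contains $u$. Consequently $c_\xi = c|_{W_\Ga}$ under the identification of step two, and substituting into the displayed DGS formula produces the desired identity.

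The step I expect to be the main obstacle is the vertex--by--vertex verification in step three. In particular, this check must be handled separately in the four combinatorial subcases $\{n_1=k, n_1>k\}\times\{n_2=1, n_2>1\}$, which are reflected in the different cusp counts of Figure~\ref{f:pres1} at the outermost vertex of $L_1$ and at the first vertex of $L_2$. All remaining ingredients are either formal consequences of the homotopy invariance of $d_3$ or routine computations on the intersection form of $\Ga$.
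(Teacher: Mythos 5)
Your overall strategy---compute $d_3(\xi)$ from the surgery presentation via the Ding--Geiges--Stipsicz formula and match rotation numbers against the values of $c$---is the same as the paper's, but your second step contains a genuine error, and the argument only lands on the correct constant because that error is numerically compensated by a second one in your first step. The handlebody $X$ determined by Figure~\ref{f:pres1} is \emph{not} diffeomorphic to $W_\Ga$: Figure~\ref{f:pres1} is a contact surgery presentation of $Y$, not the plumbing diagram of $\Ga$, and the Kirby calculus relating the two necessarily passes through blow--ups and blow--downs. The paper records the outcome as an orientation--preserving diffeomorphism $X\#\CP^2\cong W_\Ga\#2\CP^2$, so that $b_2(X)=b_2(W_\Ga)+1$ and $\si(X)=\si(W_\Ga)+1$, contradicting your claim that these invariants agree. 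For the same reason the equality ``$c_\xi=c|_{W_\Ga}$'' is not even well posed, since the two classes live on different manifolds; the correct statement is that the rotation--number class $\al$ extends to a class $\tilde\al$ on $X\#\CP^2$ which, under the diffeomorphism, restricts to $c|_{W_\Ga}$ on the $W_\Ga$--summand and takes value $-1$ on each $\CP^2$--generator, whence $\al^2=(c|_{W_\Ga})^2+1$.

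Correspondingly, the additive constant in your version of the surgery formula does not match the paper's: the form of \cite[Corollary~3.6]{DGS} used in the paper (after trading $\chi$ for $b_2$) gives $d_3(\xi)=\frac{1}{4}\bigl(\al^2-3\si(X)-2b_2(X)\bigr)+2$, not $+1$ as you assert from counting a single $(+1)$--surgery. The net correction $\frac{1}{4}(1-3-2)=-1$ arising from the three discrepancies above is exactly what converts that $+2$ into the $+1$ of the statement. In your write--up the missing $-1$ and the understated constant cancel, which is why you arrive at the right formula, but neither intermediate claim is justified. The step you flag as the main obstacle (the vertex--by--vertex cusp count) is indeed needed, but the real content of the lemma is the bookkeeping of how $\si$, $b_2$ and the square of the characteristic class change under the Kirby moves identifying $\partial X$ with $\partial W_\Ga$---precisely the step your proposal treats as routine and gets wrong.
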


\begin{proof}
One can think of Figure~\ref{f:pres1} smoothly, as a handlebody presentation of a smooth 4--manifold $X$ having 
one 0--handle and a number of 2--handles. The knot orientations indicated in Figure~\ref{f:pres1} determine 
rotation numbers which, according to~\cite{Gompf, GS}, can be computed by the formula 
\begin{equation}\label{e:rot}
{\rm {rot}}(K)=\frac 12(c_d -c_u),
\end{equation}
where $c_u $ and $c_d$ denote the number of up and down cusps, respectively, in the
front projection of $K$. Let $\al\in H^2(X;\Z)$ be the unique cohomology class which evaluates on the 
2--homology class corresponding to an oriented knot $K$ of the diagram
as the rotation number of $K$. By~\cite[Corollary~3.6]{DGS} (with the Euler characteristic $\chi$ 
replaced by the second Betti number $b_2$ to adjust for the standard convention in Heegaard Floer theory) 
we have
\[
d_3(\xi) = \frac 14 (\al^2 - 3\si(X) -2 b_2(X)) + 2.
\]
An easy exercise in Kirby calculus shows that there is a orientation--preserving diffeomorphism 
$\varphi\co X\#\CP^2\cong {W_\Ga}\# 2\CP^2$. Therefore we have 
\[
b_2(X) = b_2(W_\Ga) +1\quad\text{and}\quad \si(X) = \si(W_\Ga) + 1. 
\]
Moreover, we can define the extension $\tilde\al\in H^2(X\#\CP^2;\Z)$ of $\al$ by declaring its value on the standard generator of
 the 2--homology of the $\CP^2$--summand to be $-1$. It is easy to check that for a natural choice of $\varphi$ the class  
$\varphi_*(\tilde\al)$ takes value $-1$ on the standard generator of each $\CP^2$--summand, and  
$\varphi_*(\tilde\al) |_{W_\Ga} = c|_{W_{\Ga}}$. Therefore  
\[
(c|_{W_\Ga})^2 + 2 = \varphi_*(\tilde\al)^2 = (\tilde\al)^2 = \al^2 + 1,  
\]
which implies $\al^2 = (c|_{W_\Ga})^2 + 1$. Thus, we conclude 
\begin{multline*}
d_3(\xi) = \frac 14 ((c|_{W_\Ga})^2 + 1 - 3(\si(W_\Ga) + 1) - 2(b_2(W_\Ga) +1)) + 2 = \\
=\frac{1}{4}\left((c|_{W_{\Ga}})^2-3\si (W_{\Ga})-2 b_2(W_{\Ga})\right)+1.
\end{multline*}
\end{proof}

In order to apply Theorem~\ref{t:magic} and conclude that the 
contact structure $\xi$ has non--zero contact invariant, thus proving Theorem~\ref{t:n1>1}, 
it now suffices to check that, when restricted to $W_{\Ga'}$, the class $c$ is contained in a full path on $\Ga'$.  The nonzero values of $c$ on $\Ga'$ are shown in parenthesis in Figure~\ref{f:ert}. 
\begin{figure}[ht]
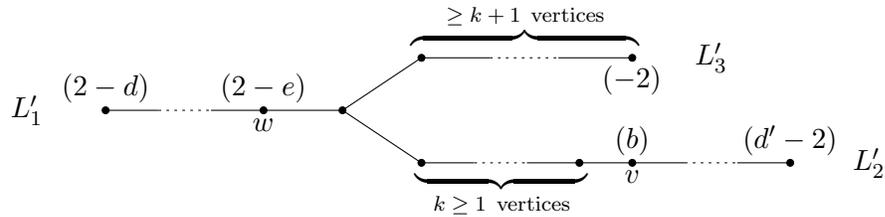

\begin{center}
\setlength{\unitlength}{1mm}
\unitlength=0.7cm
\begin{graph}(14,4)(0,-1)
\graphnodesize{0.15}

\roundnode{m1}(1,1)
\rectnode{m2}[0,0](2,1)
\rectnode{m3}[0,0](3,1)
\roundnode{m4}(4,1)

\roundnode{m5}(5.5,1)

\roundnode{m6}(7,0)
\rectnode{m7}[0,0](8,0)
\rectnode{m8}[0,0](9,0)
\roundnode{m9}(10,0)
\roundnode{m10}(11,0)
\rectnode{m11}[0,0](12,0)
\rectnode{m12}[0,0](13,0)
\roundnode{m13}(14,0)

\roundnode{m14}(7,2)
\rectnode{m15}[0,0](8.3,2)
\rectnode{m16}[0,0](9.6,2)
\roundnode{m17}(11,2)

\edge{m1}{m2}
\edge{m2}{m3}[\graphlinedash{1 2}]
\edge{m3}{m4}
\edge{m4}{m5}
\edge{m5}{m6}
\edge{m6}{m7}
\edge{m7}{m8}[\graphlinedash{1 2}]
\edge{m8}{m9}
\edge{m9}{m10}
\edge{m10}{m11}
\edge{m11}{m12}[\graphlinedash{1 2}]
\edge{m12}{m13}

\edge{m5}{m14}
\edge{m14}{m15}
\edge{m15}{m16}[\graphlinedash{1 2}]
\edge{m16}{m17}

\autonodetext{m1}[n]{$(2-d)$}
\autonodetext{m4}[n]{$(2-e)$}
\autonodetext{m4}[s]{$w$}
\autonodetext{m10}[n]{$(b)$}
\autonodetext{m10}[s]{$v$}
\autonodetext{m13}[n]{$(d'-2)$}
\autonodetext{m17}[s]{$(-2)$}

\freetext(8.5,-0.6){$\underset{\text{$k\geq 1$ vertices}}{\underbrace{\hspace{65pt}}}$}
\freetext(8.95,2.6){$\overset{\text{$\geq k+1$ vertices}}{\overbrace{\hspace{86pt}}}$}

\freetext(-0.5,1){$L'_1$}
\freetext(15.5,0){$L'_2$}
\freetext(12.5,2){$L'_3$}

\end{graph}
\end{center}
\caption{\quad The nonzero values of $c$}
\label{f:ert}
\end{figure}
In what follows it  will be convenient to introduce shorthands to keep track of the values of cohomology classes such as $c$. For example, we can express the information contained in Figure~\ref{f:ert} as follows:
\[
\begin{matrix}
                                 &    &  0 &\cdots & 0 & (-2) &\\
(2-d)\ \cdots\ (2-e)  & 0  &  & & & &\\
                            &  &    0 &\cdots & 0 & b  & \cdots\ (d'-2)
\end{matrix}
\]

\begin{lem}\label{l:path1}
The vector of values defined by $c$ on $\Ga'$ is contained in a full path.
\end{lem}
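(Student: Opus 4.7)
The plan is to exhibit an explicit full path on $\Ga'$ that contains the vector $c|_{W_{\Ga'}}$. I would proceed in four steps.

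\emph{Step 1 (in-range verification).} First I check that $u\cdot u \le \langle c, u\rangle \le -u\cdot u$ at every vertex $u$ of $\Ga'$; this is immediate from the list preceding the statement of the lemma. Inspection of the same list shows moreover that the upper bound $\langle c,u\rangle = -u\cdot u$ is attained exactly at $u=v$ (where the value is $b$), and the lower bound $\langle c,u\rangle = u\cdot u$ is attained exactly at $u=y'_3$ (where the value is $-2$). These are therefore the only vertices from which, respectively, a forward or a backward move can be performed directly at $c$.

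\emph{Step 2 (backward cascade).} I construct an initial vector $K_1$ in the same coset as $c$ by iteratively applying backward moves starting from $c$; a backward move at $u$ replaces $K$ by $K - 2\PD(u)$ and is legal when $\langle K,u\rangle = u\cdot u$. The unique first backward move is at $y'_3$; it decreases the value at $u_{c_1-2}$ by $2$, making that vertex newly eligible. Iterating inward along $L'_3$, after $c_1-1$ backward moves the value $-2$ arrives at $z'$, and a backward move at $z'$ decreases the values at $w$, $z_1$, and $u_1$ by $2$ each. This triggers further backward cascades along $L'_1$ (at $w$, $w_2,\ldots$) and $L'_2$ (at $z_1,z_2,\ldots$), and potentially a second pass through $z'$ and $L'_3$. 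After finitely many steps the cascade reaches a vector $K_1$ at which no backward move is eligible and which satisfies the initial-vector inequality.

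\emph{Step 3 (forward cascade).} Symmetrically, starting at $c$ I apply forward moves $K\mapsto K+2\PD(v)$ at vertices with $\langle K,v\rangle = -v\cdot v$. The cascade begins with the unique legal forward move at $v$; the neighbor $z_k$ of $v$ in $L'_2[k]$ then becomes eligible, and the cascade propagates through $L'_2[k]$ toward $z'$. Depending on the configuration it continues into the other legs until reaching a terminal vector $K_m$.

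\emph{Step 4 (concatenation).} Reversing the backward cascade and concatenating with the forward cascade produces a full path $K_1\to\cdots\to c\to\cdots\to K_m$ containing $c$.

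The principal obstacle is the combinatorial verification that both cascades terminate at vectors satisfying, respectively, the initial-vector and terminal-vector inequalities. Since $z'$ is the unique bad vertex of $\Ga'$ (valency $3$ exceeding $|z'\cdot z'|=2$), a backward move at $z'$ affects all three legs simultaneously, so the cascades from different legs interact there and $z'$ may need to be revisited more than once. A case analysis distinguishing $n_2=1$ from $n_2>1$, $l(L'_1)=1$ from $l(L'_1)>1$, and tracking the weights $e,b,d,d'$ as well as the lengths of the legs, is needed to describe the cascades uniformly; once this bookkeeping is performed the verification that every prescribed move is eligible at the moment it is applied is routine.
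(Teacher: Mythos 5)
Your overall architecture is the same as the paper's: starting from $c$, whose only forward--eligible vertex is $v$ and whose only backward--eligible vertex is $y'_3$, one builds a forward chain of moves $K\mapsto K+2\PD(u)$ ending at a terminal vector and a backward chain ending at an initial vector, then concatenates. The problem is that Steps 2--4 assert, rather than prove, the two claims that constitute the entire content of the lemma: that the cascades can be completed, and that they end at vectors satisfying the initial/terminal inequalities. Neither claim is automatic. First, ``no backward move is eligible'' does not imply ``initial'': a move at a neighbor of $u$ lowers $\langle K,u\rangle$ by $2$, so an unlucky order of moves can drive a value strictly below $u\cdot u$ (or, in the forward cascade, strictly above $-u\cdot u$), producing a vector that is neither an endpoint of a full path nor eligible for further moves. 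One must therefore exhibit a \emph{specific} order of moves and track the values throughout, which is what the paper's proof does over several pages.

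Second, and more importantly, whether such an order exists at all depends delicately on the numerical hypotheses of Theorem~\ref{t:n1>1}, and your sketch never invokes them. In the paper's forward path the value at $w$ becomes $4-e$, which is admissible only because $e\geq 3$ (a consequence of $n_1>1$); the value at $v$ becomes $4-b$, admissible only because $b\geq 3$ whenever $l(L'_2)>k+1$. In the backward path the central vertex $z'$ is revisited repeatedly, each pass lowering the value at $w$ by $2$, and the count of passes is governed by $k$; the path closes up precisely because $e=k+2$ (or $e=k+1$ when $l(L'_1)=1$), i.e.\ because $b_1=k+2$, and because $l(L'_3)=c_1-1\geq k+1$. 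If any of these relations failed the cascade would push a value out of range, so the ``routine bookkeeping'' you defer is exactly where the hypotheses are consumed and where the proof lives. As written, the proposal is a correct plan of attack but not a proof; to complete it you would need to reproduce the explicit case analysis ($l(L'_1)=1$ versus $l(L'_1)>1$, $k=1$ versus $k>1$, $l(L'_2)=k+1$ versus $l(L'_2)>k+1$) and the move-by-move verification that the paper carries out.
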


\begin{proof}
Throughout the proof we will identify, when convenient, characteristic classes with their 
sets of values on the standard homology generators. Observe that the value $-2$ on $y'_3$ 
prevents $c$ from being initial, and the value $b$ on $v$ prevents $c$ from being 
terminal.

We start by showing  that there is a sequence of characteristic classes from $c$ 
to a terminal vector $L$. Recall from Section~\ref{s:HF} that this means 
\[
u\cdot u\leq L\cdot u\leq -u\cdot u - 2
\]
for every vertex $u$. Replacing $c$ with $c+2\PD(v)$ creates value $-b$ on $v$, $+2$ 
on the vertex of position $k$ on $L_2'$ and, if $l(L_2')>k+1$, value $f$ on the vertex to the 
right of $v$, of position $k+2$ on $L_2'$ (assuming its weight is $-f$). 
The resulting set of values can be represented as follows:
\[
\begin{matrix}
                                &  &  0 &\cdots & 0 & (-2) &\\
(2-d)\ \cdots\ (2-e) & 0  &  & & & &\\
                             & &    0 &\cdots & 2 & (-b)  & f\cdots\ (d'-2)
\end{matrix}
\]
By a sequence of similar operations we obtain
\[
\begin{matrix}
                                &  &  0 &\cdots\  0& (-2) & &\\
(2-d)\ \cdots\ (2-e) & 0  &  & & & &\\
                             & &    0 &\cdots\ 2 & (-2) & (2-b)  & f\cdots\ (d'-2)
\end{matrix}
\]
\[
\vdots
\]
\[
\begin{matrix}
                                &  &  0 &\cdots & 0 & (-2) &\\
(2-d)\ \cdots\ (2-e) & 2  &  & & & &\\
                             & &   (-2) &\cdots & 0 & (2-b)  & f\cdots\ (d'-2)
\end{matrix}
\]
Adding twice the Poincar\'e dual of the homology class corresponding to the 
central vertex we get 
\[
\begin{matrix}
                                &  &  2 &\cdots & 0 & (-2) &\\
(2-d)\ \cdots\ (4-e) & (-2)  &  & & & &\\
                             & &  0 &\cdots & 0 & (2-b)  & f\cdots\ (d'-2)
\end{matrix}
\]
Notice that $4-e$ is an admissible value for a class in a full path,
that is  
\[
-e<-e+4<e, 
\]
because we are assuming $e\geq 3$. By another sequence of similar 
operations we arrive at 
\[
\begin{matrix}
                                &  &  0 &\cdots\ (-2) &  2 & (-2) &\\
(2-d)\ \cdots\ (4-e) & 0  &  & & & &\\
                             & &  0 &\cdots & 0 & (2-b)  & f\cdots\ (d'-2)
\end{matrix}
\]
and then at 
\[
\begin{matrix}
                                &  &  0 &\cdots\  0&  (-2) & 0 &\\
(2-d)\ \cdots\ (4-e) & 0  &  & & & &\\
                             & &  0 &\cdots & 0 & (2-b)  & f\cdots\ (d'-2)
\end{matrix}
\]
If $l(L_2')>k+1$ we need to deal with the value $f$ on the vertex at position $k+2$ on $L_2'$.
Recall that $l(L'_2)>k+1$ occurs only if $b\geq 3$. Therefore we can proceed to 
\[
\begin{matrix}
                                &  &  0 &\cdots\  0&  (-2) & 0 &\\
(2-d)\ \cdots\ (4-e) & 0  &  & & & &\\
                             & &  0 &\cdots & 0 & (4-b)  & (-f)\ g\cdots\ (d'-2)
\end{matrix}
\]
where we are assuming that the value of the vertex of position $k+3$ exists and has weight 
$-g$ (if such vertex does not exist, we are done). Notice that the value $4-b$ is good because $b\geq 3$.
If we keep going like this we eventually arrive at the vector $L$ given by 
\[
\begin{matrix}
                                &  &  0 &\cdots\  0&  (-2) & 0 &\\
(2-d)\ \cdots\ (4-e) & 0  &  & & & &\\
                             & &  0 &\cdots & 0 & (4-b)  & (2-f)\ (2-g)\cdots\ (-d')
\end{matrix}
\]
The vector $L$ is terminal in the sense of Section~\ref{s:HF}.  The
above argument shows that, regardless of whether $l(L'_2)=k+1$ or
$l(L'_2)>k+1$, we can always find a path joining $c$ to such a terminal
vector. This concludes the first half of the proof.

In the second part of the proof we show the existence of a path
joining $c$ back to an initial vector $K$ in the sense of
Section~\ref{s:HF}. At each step now we are subtracting, instead of
adding, twice the Poincar\'e dual of a homology class corresponding to
a vertex.  Since there is no new conceptual ingredient involved, we
present just the shorthand description of the various steps,
commenting only when strictly necessary.
\[
c = 
\begin{matrix}
                                 &    &  0 &\cdots & 0 & (-2) &\\
(2-d)\ \cdots\ (2-e)  & 0  &  & & & &\\
                            &  &    0 &\cdots & 0 & b  & \cdots\ (d'-2)
\end{matrix}
\]
\medskip
\[
\lra
\begin{matrix}
                                 &    &  0 &\cdots & (-2) & 2 &\\
(2-d)\ \cdots\ (2-e)  & 0  &  & & & &\\
                            &  &    0 &\cdots & 0 & b  & \cdots\ (d'-2)
\end{matrix}
\]
\medskip
\[
\lra\cdots\lra
\begin{matrix}
                                 &    &  2 &\cdots & 0 & 0 &\\
(2-d)\ \cdots\ (2-e)  & (-2)  &  & & & &\\
                            &  &    0 &\cdots & 0 & b  & \cdots\ (d'-2)
\end{matrix}
\]
\medskip
\[
\lra
\begin{matrix}
                                 &    &  0 &\cdots & 0 & 0 &\\
(2-d)\ \cdots\ (-e)  & 2  &  & & & &\\
                            &  &   (-2) &\cdots & 0 & b  & \cdots\ (d'-2)
\end{matrix}
\]
\medskip
\[
\lra\cdots\lra
\begin{matrix}
                                 &    &  0 &\cdots & 0 & 0 &\\
(2-d)\ \cdots\ (-e)  & 0  &  & & & &\\
                            &  &    0 &\cdots\ 2 & (-2) & b  & \cdots\ (d'-2)
\end{matrix}
\]
\medskip
\[
\lra
\begin{matrix}
                                 &    &  0 &\cdots & 0 & 0 &\\
(2-d)\ \cdots\ (-e)  & 0  &  & & & &\\
                            &  &    0 &\cdots & 2 & (b-2)  & \cdots\ (d'-2)
\end{matrix}
\]
\medskip
\[
\lra
\begin{matrix}
                                 &    &  0 &\cdots & 0 & 0 &\\
(2-d)\ \cdots\ (-h) \ e & (-2)  &  & & & &\\
                            &  &    0 &\cdots & 2 & (b-2)  & \cdots\ (d'-2)
\end{matrix}
\]
where we are assuming that the second vertex of $L_1'$ exists and has weight 
$-h$. Still assuming that $l(L'_1)>1$, we eventually arrive at the 
configuration:
\[
\tag{$A$}
U = 
\begin{matrix}
                                 &    &  0 &\cdots & 0 & 0 &\\
d\ \cdots\ (h-2) \ (e-2) & (-2)  &  & & & &\\
                            &  &    0 &\cdots & 2 & (b-2)  & \cdots\ (d'-2)
\end{matrix}
\]
On the other hand, if $l(L'_1)=1$ at this point we have the set of values:
\[
\tag{$B$}
V = 
\begin{matrix}
                                 &    &  0 &\cdots & 0 & 0 &\\
e & (-2)  &  & & & &\\
                            &  &    0 &\cdots & 2 & (b-2)  & \cdots\ (d'-2)
\end{matrix}
\]
In case $(A)$ there are two subcases: $k=1$ and $k>1$. If $k=1$ we proceed as follows: 
\[
U\  =\ 
\begin{matrix}
                                 &    &  0  & \cdots \ 0 &\\
d\ \cdots\ (h-2) \ (e-2) & (-2)  &  & & \\
                                  &  &    2 &(b-2) & \cdots\ (d'-2) 
\end{matrix}
\]
\medskip
\[
\lra 
\begin{matrix}
                                 &    &  (-2)  & \cdots \ 0 &\\
d\ \cdots\ (h-2) \ (e-4) & 2  &  & & \\
                                    &  &   0 &(b-2) & \cdots\ (d'-2) 
\end{matrix}
\]
\medskip
\[
\lra\cdots\lra
\begin{matrix}
                                 &    &  0  & \cdots \ 2 &\\
d\ \cdots\ (h-2) \ (e-4) & 0  &  & & \\
                                    &  &   0 &(b-2) & \cdots\ (d'-2) 
\end{matrix}
\ =\ K.
\]
Observe that this vector $K$ is initial, therefore in this subcase we are done. Notice that 
in case $(B)$ $e\geq 3$ forces $k>1$, therefore from now on we assume $k>1$ 
in both cases $(A)$ and $(B)$. In case $(A)$ we proceed as follows:
\[
U = 
\begin{matrix}
                                 &    &  0 &\cdots & 0 & 0 &\\
d\ \cdots\ (h-2) \ (e-2) & (-2)  &  & & & &\\
                            &  &    0 &\cdots & 2 & (b-2)  & \cdots\ (d'-2)
\end{matrix}
\]
\medskip
\[
\lra
\begin{matrix}
                                 &    &  (-2) &\cdots & 0 & 0 &\\
d\ \cdots\ (h-2) \ (e-4) & 2  &  & & & &\\
                            &  &    (-2) &\cdots & 2 & (b-2)  & \cdots\ (d'-2)
\end{matrix}
\]
\medskip
\[
\lra\cdots\lra
\begin{matrix}
                                 &    &  (-2) &\cdots & 0 & 0 &\\
d\ \cdots\ (h-2) \ (e-4) & 0  &  & & & &\\
                            &   &  0 &\cdots\ 2 & 0 & (b-2)  & \cdots\ (d'-2)
\end{matrix}
\]
\medskip
\[
\lra
\begin{matrix}
                                 &    &  2 &(-2)\ \cdots & 0 & 0 &\\
d\ \cdots\ (h-2) \ (e-4) & (-2)  &  & & & &\\
                            &   &  0 &\cdots\ 2 & 0 & (b-2)  & \cdots\ (d'-2)
\end{matrix}
\]
\medskip
\[
\lra
\begin{matrix}
                                 &    &  0 &(-2)\ \cdots & 0 & 0 &\\
d\ \cdots\ (h-2) \ (e-6) & 2  &  & & & &\\
                            &   &  (-2) &\cdots\ 2 & 0 & (b-2)  & \cdots\ (d'-2)
\end{matrix}
\]
\medskip
\[
\lra\cdots\lra
\begin{matrix}
                                 &    &  0 &(-2)\ \cdots\  0\ 0 &\\
d\ \cdots\ (h-2) \ (e-6) & 0  &  & & & &\\
                                  &   &  0 &\cdots\ 2\ 0\  0\  (b-2)  & \cdots\ (d'-2)
\end{matrix}
\]
\medskip
\[
\lra\cdots\lra
\begin{matrix}
                                 &    &  \overbrace{0\ \cdots\ (-2)\ 2\ (-2)}^{t+1}\ \cdots\  0\phantom{xxxxx} &\\
d\ \cdots\ (h-2) \ (e-2t-2) & 0  &  & \\
                                  &   & \underbrace{0\ \cdots\ 2}_{k-t}\  \cdots\  (b-2)\ \cdots\ (d'-2) &
\end{matrix}
\]
In case $(B)$ similar steps lead to 
\[
\begin{matrix}
                                 &    &  \overbrace{0\ \cdots\ (-2)\ 2\ (-2)}^{t+1}\ \cdots\ 0 \phantom{xxxxx}&\\
(e-2t) & 0  &  & \\
                                  &   & \underbrace{0\ \cdots\ 2}_{k-t}\  \cdots\  (b-2)\ \cdots\ (d'-2) &
\end{matrix}
\]
Since by assumption $e=k+2$ in case $(A)$ and $e=k+1$ in case $(B)$,
choosing $t=k-1$ at this point leads, in both cases $(A)$ and $(B)$,
to the vector
\[
\begin{matrix}
                                 &    &  \overbrace{0\ \cdots\ (-2)\ 2\ (-2)}^{k}\ \cdots\ 0 \phantom{xxx}&\\
d\ \cdots\ (h-2) \ (4-e) & 0  &  & \\
                                  &   &  2\ 0\ \cdots\  0\  (b-2)\ \cdots\ (d'-2) &
\end{matrix}
\]
Then, we proceed as follow
 
\[
\lra\cdots\lra
\begin{matrix}
                                 &    &  \overbrace{2\ \cdots\ 0\ (-2)}^{k}\ \cdots\ 0 \phantom{xxxxxxx}&\\
d\ \cdots\ (h-2) \ (4-e) & (-2) &  & \\
                                  &   &  2\ 0\ \cdots\  0\  (b-2)\ \cdots\ (d'-2) &
\end{matrix}
\]
\medskip
\[
\lra\cdots\lra
\begin{matrix}
                                 &    &  \overbrace{0\ \cdots\ 0\ (-2)}^{k}\ \cdots\ 0 \phantom{xxxxxx}&\\
d\ \cdots\ (h-2) \ (2-e) & 2 &  & \\
                                  &   &  0\ \cdots\  0\  (b-2)\ \cdots\ (d'-2) &
\end{matrix}
\]
\medskip
\[
\lra
\begin{matrix}
                                 &    &  \overbrace{0\ \cdots\ (-2)\ (2)}^{k}\ (-2)\cdots\ 0 \phantom{xx}&\\
d\ \cdots\ (h-2) \ (2-e) & 2 &  & \\
                                  &   &  0\ \cdots\  0\  (b-2)\ \cdots\ (d'-2) &
\end{matrix}
\]
\medskip
\[
\lra\cdots\lra
\begin{matrix}
                                 &    &  2 \ \overbrace{0\ \cdots\ 0\ (-2)}^k \ 0\cdots\ 0 \phantom{xxx}& \\
d\ \cdots\ (h-2) \ (2-e) & 0 &  & \\
                                  &   &  0\ \cdots\  0\ (b-2) \ \cdots \  (d'-2) &
\end{matrix}
\]
\medskip
\[
\lra\overset{(\text{$t-1$ steps})}{\cdots}\lra
\begin{matrix}
                                 &    & \overbrace{0\ \cdots\ 0\ 2}^{t}\  \overbrace{0\ \cdots\ 0\ (-2)}^{k}\ 0\cdots\ 0 &\\
d\ \cdots\ (h-2) \ (2-e) & 0 &  & \\
                                  &   &  0\ \cdots\  0\  (b-2)\ \cdots\ (d'-2) \phantom{xxx}&
\end{matrix}
\]
\medskip
\[
\lra\cdots\lra
\begin{matrix}
                                 &    & \overbrace{0\ \cdots\ 0\ 2}^{l(L'_3)-k}\ \overbrace{0\ \cdots\ 0\ (-2)}^{k} \phantom{xxx}&\\
d\ \cdots\ (h-2) \ (2-e) & 0 &  & \\
                                  &   &  0\ \cdots\  0\  (b-2)\ \cdots\ (d'-2) &
\end{matrix}
\]
\medskip
\[
\lra
\begin{matrix}
                                 &    & \overbrace{0\ \cdots\ 0\ 2}^{l(L'_3)-k}\ \overbrace{0\ \cdots\ (-2)\ 2}^{k} \phantom{xx}&\\
d\ \cdots\ (h-2) \ (2-e) & 0 &  & \\
                                  &   &  0\ \cdots\  0\  (b-2)\ \cdots\ (d'-2) &
\end{matrix}
\]
\medskip
\[
\lra\cdots\lra
\begin{matrix}
                                 &    & 0 & \cdots & 0 & \overbrace{2\ \cdots\ 0}^{k} & &\\
d\ \cdots\ (h-2) \ (2-e) & 0 &  & & & & &\\
                                  &   &  0 & \cdots & 0 & (b-2)\ \cdots & (d'-2) & 
\end{matrix}
\ =\ K
\]
Clearly $K$ is an initial vector, so this concludes the proof of the existence of the full path.
\end{proof}

\begin{proof}[Proof of Theorem~\ref{t:n1>1}]
By the assumptions on $Y$ and Lemmas~\ref{l:square1} and~\ref{l:path1}, Theorem~\ref{t:magic} 
applies. Therefore we conclude that the contact structure $\xi$ defined in Figure~\ref{f:pres1} 
has nonzero contact invariant.
\end{proof}

\section{Second application of the criterion}
\label{s:secondappl}

In this section we apply Theorem~\ref{t:magic} to prove Theorem~\ref{t:n1=1} below. 

\begin{thm}\label{t:n1=1}
Let $Y = Y(-1;r_1,r_2,r_3)$ with $e(Y) > 0$, and suppose that $Y$ carries no transverse contact 
structures. Suppose that $1>r_1\geq r_2\geq r_3>0$ satisfy Expansions~\eqref{e:cfracs} and 
each of the following holds:
\begin{itemize}
\item
$n_1=1$ and $a_1=2$;
\item
$c_1\geq b_1=3$; 
\item
$n_3=1$.
\end{itemize}
Then, either $Y\cong M_n$ for some $n\geq 1$ or  $Y$ carries a contact structure 
$\xi$ given by a surgery diagram as in Figure~\ref{f:OSzIII2} and such that $c(Y,\xi)\neq 0$. 
\end{thm}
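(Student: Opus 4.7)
The plan is to follow the template established by Section~\ref{s:firstappl}: apply Theorem~\ref{t:magic} to a suitable contact structure $\xi$ on $Y$ and a suitable characteristic class $c\in H^2(R;\Z)$, where $R$ is the closed $4$--manifold of Lemma~\ref{l:ratsur}. The hypotheses now force $n_1=k=1$, so both $L_1$ (of weight $-2$) and the dual leg $L_1'$ (of weight $-(k+1)=-2$) reduce to a single vertex, while the central vertex $z'$ of $\Ga'$ also has weight $-2$. The leg $L_3'$ is a chain of $c_1-1\geq 2$ vertices of weight $-2$, and $L_2'$ begins with one vertex of weight $-2$ followed by a vertex $v$ of weight $-b$, where $b=2$ if $n_2=1$ and $b\geq 3$ if $n_2>1$. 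First I would spell out this description of $\Ga$ and $\Ga'$, fix an identification of the vertices with standard generators $h,e_1,e_2,\ldots$ of $H_2(R;\Z)$ exactly as after Lemma~\ref{l:ratsur}, and use essentially the same defining formula~\eqref{e:magic} for the subsets $S,T\subset E$ and for $\operatorname{PD}(c)$, so that $S\cap T=\emptyset$, each $\langle c,e_i\rangle=\pm 1$, $c$ is characteristic, and $c^2=\sigma(R)$.

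Next I would write down a contact surgery diagram for $\xi$ modelled on Figure~\ref{f:pres1}, but simplified to reflect $n_1=k=1$. By Kirby calculus the smooth manifold underlying the diagram is $Y=\partial W_\Ga$, and by~\cite{DG1,DG2} the diagram is equivalent to one as in Figure~\ref{f:OSzIII2}. The $d_3$ identity should be verified by a literal adaptation of Lemma~\ref{l:square1}: compute the rotation numbers from the front projection via~\eqref{e:rot}, assemble them into a class $\alpha$ on the underlying $4$--manifold $X$, use a diffeomorphism $X\#\CP^2\cong W_\Ga\# 2\CP^2$ to identify $\varphi_*(\tilde\alpha)|_{W_\Ga}$ with $c|_{W_\Ga}$, and deduce $\alpha^2=(c|_{W_\Ga})^2+1$. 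The first and third hypotheses of Theorem~\ref{t:magic} then hold automatically.

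The main obstacle is the analog of Lemma~\ref{l:path1}, i.e.\ producing a full path on $\Ga'$ through $c|_{W_{\Ga'}}$. In Lemma~\ref{l:path1} the crucial admissibility check was $-e<4-e<e$, which required $e\geq 3$; here $e=2$ at the single vertex $w$ of $L_1'$, so the previous line of argument is no longer available and $L_1'$ contributes no ``play''. This is precisely where the excluded manifolds $M_n$ must obstruct the construction: when $(b_1,\ldots,b_{n_2})=(3,2,\ldots,2)$ \emph{and} $c_1=2n_2+3$, the tree $\Ga'$ is rigid in a way that forbids any full path extending $c|_{W_{\Ga'}}$, consistent with the fact that $Y\cong M_n$ carries no tight contact structure. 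For every other $Y$ meeting the hypotheses, either $n_2>1$ (so that $L_2'$ contains a vertex $v$ of weight $-b$ with $b\geq 3$) or $c_1$ exceeds the value $2n_2+3$ dictated by the $M_n$--case (so that $L_3'$ is longer than in $M_n$); in either case the extra flexibility allows the sequence of $\pm 2\operatorname{PD}(u)$ moves.

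I expect to organize the verification into the two subcases $n_2=1$ and $n_2>1$, and within the first subcase further distinguish $c_1=2n_2+3$ (i.e.\ $c_1=5$, giving $M_1$) from $c_1>5$. In each non--$M_n$ case I would exhibit explicit sequences of characteristic representatives, written in the shorthand tabular notation introduced before Lemma~\ref{l:path1}, carrying $c|_{W_{\Ga'}}$ to an initial vector on one side and to a terminal vector on the other. Once both chains are produced, Theorem~\ref{t:magic} yields $c(Y,\xi)\neq 0$ and hence the tightness of $\xi$, completing the proof of Theorem~\ref{t:n1=1}.
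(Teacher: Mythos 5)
Your overall strategy (apply Theorem~\ref{t:magic} directly to every non--$M_n$ manifold satisfying the hypotheses) diverges from the paper's, and the divergence hides a genuine gap. The paper does \emph{not} attack the whole family with the full--path machinery. Its first and essential step is Lemma~\ref{l:last}: by blowing down and invoking the torus--knot surgery result of~\cite{OSzI}, every case with $c_1\neq 2s+5$ is handled outright (the contact structure comes from a contact surgery on a Legendrian $(2,2s+3)$--torus knot, and \cite{Ghigg} together with \cite[Proposition~6.1]{OSzIII} is needed to see it has the form of Figure~\ref{f:OSzIII2}), and the cases $n_2>s+2$ are reduced to $n_2=s+2$ by further Legendrian surgeries. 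Only after this reduction --- which leaves the single two--parameter family of Figure~\ref{f:subcase1} --- does the paper define a class $c$ (with a set $S'$ defined differently from the $S$ of Section~\ref{s:firstappl}), a new diagram (Figure~\ref{f:pres2}), and carry out the full--path computation of Lemma~\ref{l:path2}. Your proposal omits this reduction entirely and therefore must produce full paths for a far larger collection of dual graphs $\Ga'$; you assert that explicit sequences can be exhibited but do not produce them, and nothing in the proposal makes this plausible.

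Moreover, the case analysis you substitute for the reduction is not correct. The manifold $M_n$ has $n_2=n$, so for $n\geq 2$ it satisfies $n_2>1$ and its dual leg $L_2'$ does contain a vertex of weight $\leq -3$; hence ``$n_2>1$, so $L_2'$ contains a vertex $v$ of weight $-b$ with $b\geq 3$'' cannot by itself supply the ``extra flexibility'' that makes the full path exist, since the same flexibility would be present for $M_n$, which carries no tight structure at all. In the other direction, your dichotomy misses the manifolds with $b_2=\cdots=b_{n_2}=2$ and $3\leq c_1<2n_2+3$ (for instance $Y(-1;\tfrac12,\tfrac13,\tfrac13)$ and $Y(-1;\tfrac12,\tfrac13,\tfrac14)$, both of which satisfy all hypotheses of the theorem and are not diffeomorphic to any $M_n$): here $n_2=1$ and $c_1$ does \emph{not} exceed $2n_2+3$, so neither branch of your argument applies. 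These are exactly the cases the paper disposes of via~\cite{OSzI} in Lemma~\ref{l:last}. To repair the proposal you would need either to import that reduction or to carry out, case by case and with a correct criterion distinguishing the $M_n$'s, the full--path constructions you currently only promise.
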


Under the assumptions of Theorem~\ref{t:n1=1} we have $Y=W_\Ga$, where the tree $\Ga$ takes the form given in Figure~\ref{f:subcase0}.
\begin{figure}[ht]
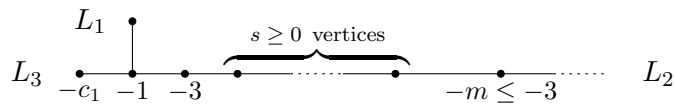

\begin{center}
\setlength{\unitlength}{1mm}
\unitlength=0.7cm
\begin{graph}(11,1)(-2,0)
\graphnodesize{0.15}

\roundnode{n1}(-1,0)
\roundnode{n2}(0,0)
\roundnode{n3}(1,0)
\roundnode{n4}(2,0)
\rectnode{n5}[0,0](3,0)
\rectnode{n6}[0,0](4,0)
\roundnode{n7}(5,0)
\roundnode{n8}(7,0)
\roundnode{n9}(0,1)
\rectnode{n10}[0,0](8,0)
\rectnode{n11}[0,0](9,0)

\edge{n1}{n2}
\edge{n2}{n3}
\edge{n3}{n4}
\edge{n4}{n5}
\edge{n5}{n6}[\graphlinedash{1 2}]
\edge{n6}{n7}
\edge{n7}{n8}
\edge{n8}{n10}
\edge{n10}{n11}[\graphlinedash{1 2}]
\edge{n2}{n9}

\autonodetext{n1}[s]{\small $-c_1$} 
\autonodetext{n2}[s]{\small $-1$} 
\autonodetext{n3}[s]{\small $-3$}
\freetext(3.5,0.53){$\overset{\text{$s\geq 0$ vertices}}{\overbrace{\hspace{70pt}}}$} 
\autonodetext{n8}[s]{\small $-m\leq -3$}

\freetext(-2,0){$L_3$}
\freetext(-0.8, 1){$L_1$}
\freetext(10,0){$L_2$}

\end{graph}
\end{center}
\caption{\quad The tree $\Ga$ under the assumptions of Theorem~\ref{t:n1=1}}
\label{f:subcase0}
\end{figure}

\begin{lem}\label{l:last}
To prove Theorem~\ref{t:n1=1} it suffices to prove the statement under the 
following extra assumptions: 
\begin{itemize}
\item
$c_1=2s+5$;
\item
$n_2=s+2$,
\end{itemize}
where $s\geq 0$ is the number of $(-2)$--vertices on $L_2$ after the first vertex. 
\end{lem}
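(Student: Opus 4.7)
My strategy is a case analysis on the ways in which the extra assumptions $c_1=2s+5$ and $n_2=s+2$ can fail, given the hypotheses of Theorem~\ref{t:n1=1}. Under those hypotheses, $\Ga$ is as in Figure~\ref{f:subcase0}: $L_1$ a single $(-2)$-vertex, $L_3$ a single $(-c_1)$-vertex with $c_1\geq 3$, and $L_2$ starting with $-3$ followed by $s\geq 0$ intermediate $(-2)$-vertices. The failure regimes are: \emph{(I)} $n_2=s+1$, so $L_2=(-3,-2,\ldots,-2)$; \emph{(II)} $n_2=s+2$ with $c_1\neq 2s+5$; and \emph{(III)} $n_2>s+2$. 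In each regime I will show that either $Y\cong M_n$, or $Y$ admits a contact structure of the form of Figure~\ref{f:OSzIII2} with nonzero contact invariant.

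\textbf{Regimes (I) and (II).} A direct comparison of $\Ga$ with the plumbing of $M_n$ in Figure~\ref{f:Mnplumb} (where $s=n-1$, $n_2=n$, and $c_1=2n+3$) shows that in regime (I) we have $Y\cong M_{s+1}$ precisely when $c_1=2s+5$. The remaining subcase of (I) (where $c_1\neq 2s+5$) and all of (II) will be handled by adapting the blow-down strategy of Proposition~\ref{p:reduction2}: iteratively contract the central $(-1)$-circle together with the chain of adjacent $(-2)$'s to arrive at a simpler framed link. When $c_1>2s+5$, the residual surgery coefficient on $L_3$ retains enough negativity that every surgery can be realized as contact $(-1)$-surgery on a Legendrian unknot in the standard tight $S^3$, yielding a Stein fillable contact structure by~\cite{Gompf}. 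In regime (II), the terminal $-m\leq-3$ on $L_2$ provides additional slack for such a realization. If $c_1<2s+5$ (which can only occur in regime (I)), I would insert an auxiliary $(+1)$-framed contact surgery on an unknot linking the others to account for the sign; the resulting contact structure is of the form of Figure~\ref{f:OSzIII2}, and its contact invariant can be shown to be nonzero by a rotation-number computation in the spirit of Lemma~\ref{l:square1} combined with Theorem~\ref{t:char}.

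\textbf{Regime (III) and main obstacle.} When $n_2>s+2$, the extra vertices $-b_{s+3},\ldots,-b_{n_2}$ at the tail of $L_2$ (each with weight $\leq -2$) can be produced by appending contact $(-1)$-surgeries on Legendrian unknots to a contact surgery diagram realizing the subcase $n_2=s+2$. Since contact $(-1)$-surgery preserves the nonvanishing of the contact Ozsv\'ath--Szab\'o invariant, this reduces regime (III) to the lemma's restricted setting. The main obstacle will be the detailed bookkeeping in regimes (I) and (II): one must ensure that after all the Kirby moves and Legendrian realization the resulting surgery diagram indeed matches the three-legged format of Figure~\ref{f:OSzIII2}, and that the produced contact structure has provably nonzero contact invariant.
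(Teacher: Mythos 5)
Your overall skeleton (blow down to a simpler surgery link, identify the exceptional case with $M_n$, and absorb the tail of $L_2$ by appending contact $(-1)$--surgeries) matches the paper's, and your regime (III) reduction is essentially the paper's last step. But there is a genuine gap at the heart of regimes (I) and (II). After blowing down the central $(-1)$, the single vertex of $L_1$ and the first $s+1$ vertices of $L_2$, the surviving component coming from $L_3$ is \emph{not} an unknot: the repeated blow-downs through a circle that is doubly linked braid it into the $(2,2s+3)$--torus knot $T_{2,2s+3}$, with smooth surgery coefficient $\ga=-c_1+4s+6$. Your claim that "every surgery can be realized as contact $(-1)$--surgery on a Legendrian unknot" is therefore false, and the sign of $\ga-(2s+1)$ (equivalently of $2s+5-c_1$) decides whether the torus knot surgery is negative or positive with respect to the maximal Thurston--Bennequin framing. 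The case $c_1<2s+5$ requires a \emph{positive} contact surgery on $T_{2,2s+3}$, and the nonvanishing of the contact invariant there is exactly the content of~\cite[Theorem~1.1]{OSzI}, which the paper invokes; your substitute ("an auxiliary $(+1)$--surgery ... rotation-number computation ... Theorem~\ref{t:char}") is not a proof, since applying Theorem~\ref{t:char} requires computing the correction term $d(Y,\t_\xi)$, which you never address. Also, in regime (II) the "slack" from the terminal $-m\le -3$ is irrelevant: the problematic coefficient sits on the torus knot component, not on the tail of $L_2$.

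A second omission: the conclusion of Theorem~\ref{t:n1=1} requires $\xi$ to be given by a surgery diagram as in Figure~\ref{f:OSzIII2}, and the contact structures you produce (Stein fillable ones from all-negative surgeries, or the ones from~\cite{OSzI}) are not a priori of that form. The paper closes this gap using the standing hypothesis that $Y$ carries no transverse contact structures: by~\cite[Theorem~1.2]{Ghigg} any tight $\xi$ on such a $Y$ has maximal twisting $t(\xi)=0$, and then~\cite[Proposition~6.1]{OSzIII} shows $\xi$ is isotopic to one given by a diagram as in Figure~\ref{f:OSzIII2}. Without some such argument your reduction does not deliver the statement actually needed in the remaining case $c_1=2s+5$, $n_2=s+2$.
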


\begin{proof}
Let $Y=Y(-1;r_1,r_2,r_3)$ with $e(Y)>0$, suppose that $Y$ carries no transverse contact 
structures and the $r_i$ satisfy the assumptions of Theorem~\ref{t:n1=1}. 
We will argue as in Proposition~\ref{p:reduction2}, viewing the weighted tree as prescribing an 
integral surgery presentation for $Y=\del W_\Ga$. We successively
blow down $(-1)$--framed unknots by starting with the central one, continuing  
with the unique vertex of $L_1$ and then with the first $s+1$ vertices on $L_2$. 
In this way we obtain a link consisting of a $(2,
2s+3)$--torus knot, linked positively twice to an unknot if $n_2>s+1$, 
plus a further chain of unknots if $n_2>s+2$.

By~\cite{OSzI} we know that if the smooth surgery coefficient $\ga =
-c_1+2+4(s+1)$ on the $(2,2s+3)$--torus knot $T_{2,2s+3}$ is not equal
to $2s+1$, then the 3--manifold $N=S^3_\ga(T_{2,2s+3})$ carries a
contact structure $\ze$ with $c(N,\ze)\neq 0$ obtained by contact
$(\ga-2s-1)$--surgery on a Legendrian $(2,2s+3)$--torus knot with
Thurston--Bennequin invariant $2s+1$ in the standard contact
$S^3$. Since the smooth framings of the unknots are all $\leq -2$ and
all linking numbers are non--negative, the corresponding topological
surgeries can all be realized by contact $(-1)$--surgeries. Thus,
arguing as in Proposition~\ref{p:reduction2} we see that if $\ga\neq
2s+1$ (equivalently, $c_1\neq 2s+5$) then $Y$ carries a contact
structure $\xi$ such that $c(Y,\xi)\neq 0$. Since by assumption $Y$
carries no transverse contact structures, by~\cite[Theorem~1.2]{Ghigg}
the contact structure $\xi$ has maximal twisting $t(\xi)=0$. This
implies, by~\cite[Proposition~6.1]{OSzIII}, that $\xi$ is given by a
surgery diagram as in Figure~\ref{f:OSzIII2}.  Therefore, the
conclusion of Theorem~\ref{t:n1=1} holds for $Y$ and we see that the
extra assumption $c_1=2s+5$ leads to no loss of generality. Moreover,
if $c_1=2s+5$ and $n_2=s+1$ then $Y\cong M_{n_2}$.  Thus, we may
assume without loss that $c_1=2s+5$ and $n_2> s+1$.  

Finally, we observe that it suffices to assume $n_2=s+2$. In fact,
suppose that each manifold $Z$ corresponding to $n_2=s+2$ carries a
contact structure $\eta$ given by a surgery diagram as in
Figure~\ref{f:OSzIII2} with $c(Z,\eta)\neq 0$. Then, arguing as before
it is easy to see that each manifold $Y$ corresponding to $n_2>s+2$
can be obtained as the underlying 3--manifold of a contact
$(-1)$--surgery on some $(Z,\eta)$. This concludes the proof.
\end{proof}

By Lemma~\ref{l:last}, in order to finish the proof of Theorem~\ref{t:n1=1} 
it suffices to consider trees $\Ga$ as in Figure~\ref{f:subcase1} and 
dual trees $\Ga'$ as in Figure~\ref{f:subcase2}, where
unmarked vertices have weight $-2$.

\begin{figure}[ht]
\begin{center}
\setlength{\unitlength}{1mm}
\unitlength=0.7cm
\begin{graph}(9,1)(-2,0)
\graphnodesize{0.15}

\roundnode{n1}(-1,0)
\roundnode{n2}(1,0)
\roundnode{n3}(2,0)
\roundnode{n4}(3,0)
\rectnode{n5}[0,0](4,0)
\rectnode{n6}[0,0](5,0)
\roundnode{n7}(6,0)
\roundnode{n8}(7,0)
\roundnode{n9}(1,1)

\edge{n1}{n2}
\edge{n2}{n3}
\edge{n3}{n4}
\edge{n4}{n5}
\edge{n5}{n6}[\graphlinedash{1 2}]
\edge{n6}{n7}
\edge{n7}{n8}
\edge{n2}{n9}

\autonodetext{n1}[sw]{\small $-2s-5$} \autonodetext{n2}[s]{\small
$-1$} \autonodetext{n3}[s]{\small $-3$}
\freetext(4.5,0.53){$\overset{\text{$s\geq 0$
vertices}}{\overbrace{\hspace{70pt}}}$} \autonodetext{n8}[se]{\small
$-m\leq -3$}

\freetext(-1.5,0.5){$L_3$}
\freetext(1.8,1){$L_1$}
\freetext(7.5,0.5){$L_2$}

\end{graph}
\end{center}
\caption{The tree $\Ga$ under the assumptions of Theorem~\ref{t:n1=1} and Lemma~\ref{l:last}}
\label{f:subcase1}
\end{figure}

\begin{figure}[ht]
\begin{center}
\setlength{\unitlength}{1mm}
\unitlength=0.7cm
\begin{graph}(12,1)(-1,0)
\graphnodesize{0.15}

\roundnode{m1}(0,0)
\rectnode{m2}[0,0](1,0)
\rectnode{m3}[0,0](2,0)
\roundnode{m4}(3,0)
\roundnode{m5}(4,0)  
\roundnode{m6}(5,0)
\roundnode{m7}(6.5,0)
\roundnode{m8}(8,0)
\rectnode{m9}[0,0](9,0)
\rectnode{m10}[0,0](10,0)
\roundnode{m11}(11,0)
\roundnode{m12}(4,1)

\edge{m1}{m2}
\edge{m2}{m3}[\graphlinedash{1 2}]
\edge{m3}{m4}
\edge{m4}{m5}
\edge{m5}{m6}
\edge{m6}{m7}
\edge{m7}{m8}
\edge{m8}{m9}
\edge{m9}{m10}[\graphlinedash{1 2}]
\edge{m10}{m11}
\edge{m5}{m12}

\autonodetext{m7}[s]{\small $-s-3$}
\autonodetext{m7}[n]{\small $v$}
\freetext(1.5,0.55){$\overset{\text{$2(s+2)$
vertices}}{\overbrace{\hspace{65pt}}}$}
\freetext(9.5,0.6){$\overset{\text{$m-2$
vertices}}{\overbrace{\hspace{65pt}}}$}

\freetext(-1,0){$L'_3$}
\freetext(4.8,1){$L'_1$}
\freetext(12,0){$L'_2$}

\end{graph}
\end{center}
\caption{The tree $\Ga'$ under the assumptions of Theorem~\ref{t:n1=1} and Lemma~\ref{l:last}}
\label{f:subcase2}
\end{figure}

By Lemma~\ref{l:ratsur} there exists a smooth, closed 4--manifold $R$ containing $Y$ as a hypersurface, with an open tubular neighborhood $\nu(Y)\subset R$ such that  $R\setminus \nu(Y)=W_\Ga\cup W_{\Ga'}$. 
Arguing as in the previous section, we now define a characteristic cohomology class $c\in H^2(R;\Z)$ and a contact structure $\xi$ on $Y$ given by a contact surgery diagram as in Figure~\ref{f:OSzIII2}. Then, we will apply Theorem~\ref{t:magic}.

Fix an identification of the set of vertices of $\Ga$ and $\Ga'$ with, 
respetively, sets of generators for the second integral homology of $W_{\Gamma}$ and $W_{\Gamma '}$,
so that $\Ga$ and $\Ga'$ are the corresponding intersection graphs. Denote by $h$ and $e_i$ standard generators of $H_2(R; \bfz )$, where $h$ has square $+1$ and each $e_i$ has square $-1$. Under the map induced by the embedding $W_\Ga' \cup W_{\Ga'}\subset R$, up to renaming the $e_i$'s we have:
\begin{itemize}
\item
The central vertex of $\Ga$ goes to $e_1$,
\item
The central vertex of $\Ga'$ goes to $h-e_2-e_3-e_4$,
\item
The first vertex $x_i$ of each leg $L_i$, $i=1,2,3$, goes to a class of the form
$h-e_1-e_{i+1}-\sum_j e_j$.
\item
All the other vertices go to classes of the form $e_j-\sum_i e_i$.
\end{itemize}
Denote by $x'_2$ the first vertex of $L'_2$, by $y'_3$ the last vertex
of $L'_3$ and by $v$ the vertex indicated in Figure~\ref{f:subcase2}. 
Let $E=\{e_i\}$ be the set of exceptional classes. We define a 
class $c\in H^2(R;\Z)$ by the formula 
\[
\PD(c) := h-\sum_{e\in E} e + 2\sum_{e\in T} e + 2\sum_{e\in S'} e,
\]
where $E$ is the set of exceptional classes, 
\[
T:=\{e_i\in E\ |\ e_i\cdot u\neq 0\ \text{for some}\ u\in
L_3'\setminus\{y'_3\} \}
\]
and
\[
S' := \{e_i\in E\ |\ e_i\cdot u\neq 0\ \text{for every}\ u\in L'_2\setminus\{v, x'_2\}\ \text{and}\ e_i\cdot x'_2=e_i\cdot v=0\}.
\]
Observe that by construction $S\cap T=\emptyset$, which gives $\langle c, e_i\rangle = \pm 1$ 
for every $e_i$. This immediately implies that $c$ is characteristic and $c^2 =\sigma (R)$. 
When restricted to $W_{\Ga}$, the class $c$ takes the values given by  
\[
\begin{matrix}
                  &  (-2)   &                  \\
(2s+3)   &  1    & (-3)\ 0\ \cdots\ 0\ (m-2).
\end{matrix}
\]

Define $\xi$ to be the contact structure defined by the contact surgery diagram of 
Figure~\ref{f:pres2}. By~\cite{DG1,DG2}, this diagram is equivalent to a contact surgery diagram 
as in Figure~\ref{f:OSzIII2}. Moreover, with a little Kirby calculus as in Section~\ref{s:firstappl} 
it is easy to check that the underlying topological 3--manifold is $Y=\del W_\Ga$. 
\begin{figure}[!ht]
\begin{center}
\psfrag{+1}{$+1$}
\psfrag{-1}{$-1$}
\psfrag{k}{\small $k$}
\psfrag{sta}{\small down cusps}
\psfrag{2s4}{\small $4s+9$}
\psfrag{s}{\small $s$}
\psfrag{s1}{\tiny $-1$}
\psfrag{sst}{\small down cusps}
\psfrag{unk}{\small unknots}
\psfrag{m-2}{\small $2m-3$}
\epsfig{file=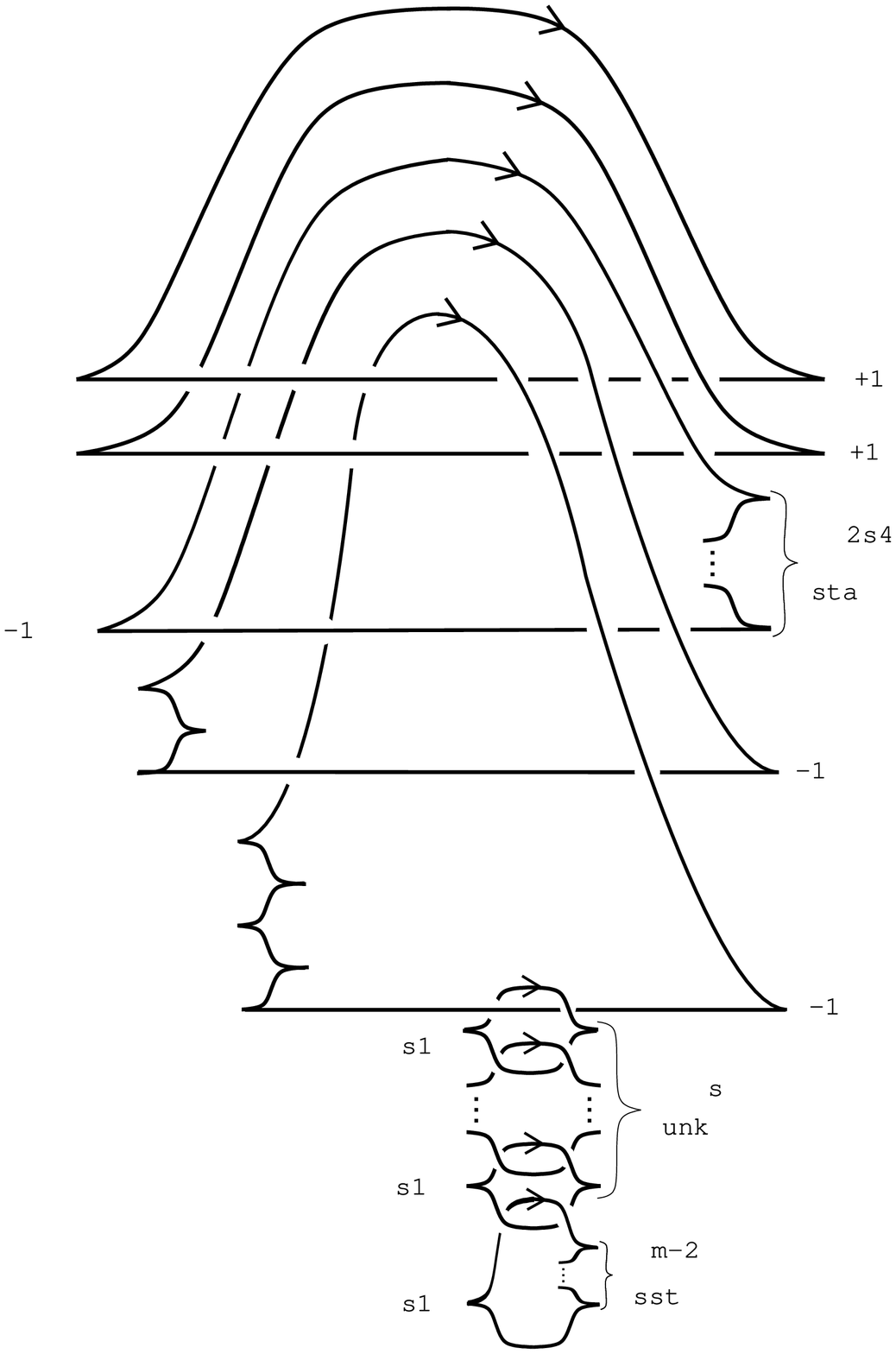, height=11cm}
\end{center}
\caption{The contact structure $\xi$ used in the proof of Lemma~\ref{l:last}}
\label{f:pres2}
\end{figure}

\begin{lem}\label{l:square2}
We have 
\[
d_3(\xi) = \frac{1}{4}\left((c|_{W_{\Ga}})^2-3\si (W_{\Ga})-2 b_2 (W_{\Ga})\right)+1.
\]
\end{lem}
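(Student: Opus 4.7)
The plan is to follow exactly the template used to prove Lemma~\ref{l:square1}, transplanting the argument to the contact surgery diagram of Figure~\ref{f:pres2}. More precisely, I would first regard Figure~\ref{f:pres2} smoothly as a handlebody presentation of a 4--manifold $X'$ built from one 0--handle and a collection of 2--handles, one for each Legendrian knot in the diagram. The rotation numbers of the Legendrian knots, computed via the standard cusp formula $\rot(K) = \tfrac12 (c_d - c_u)$, determine a cohomology class $\al' \in H^2(X';\Z)$ evaluating on each 2--homology generator as the rotation number of the corresponding knot. By \cite[Corollary~3.6]{DGS} (corrected to use $b_2$ in place of $\chi$) this yields
\[
d_3(\xi) = \frac{1}{4}\bigl((\al')^2 - 3\si(X') - 2 b_2(X')\bigr) + 2.
\]

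The next step is the Kirby calculus identification. I would convert each contact framing in Figure~\ref{f:pres2} into a smooth framing (subtracting one for each $(+1)$--contact surgery and adding the Thurston--Bennequin numbers appropriately) and then cancel the $(+1)$--framed unknot against its geometrically dual linking partner in the diagram, paying for this cancellation with a single $\CP^2$--summand, exactly as in the proof of Lemma~\ref{l:square1}. This should produce an orientation--preserving diffeomorphism $\varphi\co X'\#\CP^2 \cong W_\Ga \# 2 \CP^2$, giving
\[
b_2(X') = b_2(W_\Ga) + 1,\qquad \si(X') = \si(W_\Ga) + 1.
\]

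Next I would extend $\al'$ to $\tilde\al' \in H^2(X'\#\CP^2;\Z)$ by assigning value $-1$ on the standard generator of the new $\CP^2$--summand. The core computation is to verify that, under a natural choice of $\varphi$, the pushforward $\varphi_*(\tilde\al')$ has value $-1$ on each of the two $\CP^2$--summands on the right and that $\varphi_*(\tilde\al')|_{W_\Ga}$ coincides precisely with the restriction of our chosen class $c|_{W_\Ga}$, whose values on the vertices of $\Ga$ are
\[
\begin{matrix}
                  &  (-2)   &                  \\
(2s+3)   &  1    & (-3)\ 0\ \cdots\ 0\ (m-2).
\end{matrix}
\]
Once this is confirmed, the identity $(c|_{W_\Ga})^2 + 2 = \varphi_*(\tilde\al')^2 = (\tilde\al')^2 = (\al')^2 + 1$ yields $(\al')^2 = (c|_{W_\Ga})^2 + 1$, and substituting into the $d_3$ formula above gives exactly the claimed equality, precisely as in the last display of the proof of Lemma~\ref{l:square1}.

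The main obstacle is the bookkeeping step of matching rotation numbers to the cohomological values of $c|_{W_\Ga}$. Concretely, one must check that the cusp arrangement in Figure~\ref{f:pres2}, together with the orientations induced by the Kirby moves, produces rotation numbers $2s+3$, $-3$, $0,\ldots,0$, $m-2$ on the chain of Legendrian knots that becomes the leg $L_3$ together with $L_2$ of $\Ga$, and value $1$ on the Legendrian knot that becomes the central vertex (the two $(-2)$ entries for the central vertex and the top vertex $L_1$ being absorbed by the $\CP^2$ blow--ups through the $\varphi$--identification). This verification is a direct but delicate exercise in Kirby calculus and Legendrian bookkeeping, entirely analogous to the one implicit in the proof of Lemma~\ref{l:square1}, and carrying it out completes the proof.
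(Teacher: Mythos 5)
Your proposal is correct and follows exactly the same route as the paper, whose own proof of this lemma simply states that the argument is identical to that of Lemma~\ref{l:square1} and outlines the same steps (smooth handlebody $X$ from Figure~\ref{f:pres2}, rotation--number class $\al$, \cite[Corollary~3.6]{DGS}, and the Kirby--calculus identification with $W_\Ga$ plus $\CP^2$--summands). The only quibble is a small bookkeeping slip in your final paragraph: the restriction $c|_{W_\Ga}$ has value $1$ on the central vertex and a single $(-2)$ entry on the lone $L_1$ vertex, not two $(-2)$ entries, but this does not affect the validity of the argument.
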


\begin{proof}
The proof is exactly the same as the proof of Lemma~\ref{l:square1},
so here we only outline the argument. When viewed smoothly,
Figure~\ref{f:pres2} gives a handlebody presentation of a smooth
4--manifold $X$ with one 0--handle and a number of 2--handles.  The
rotation numbers associated with every Legendrian knot determine a
cohomology class $\al\in H^2(Z;\Z)$ such that
\begin{multline}
d_3(\xi) = \frac 14 (\al^2 - 3\si(X) -2 b_2(X)) + 2 = \\
= \frac{1}{4}\left((c|_{W_{\Ga}})^2-3\si (W_{\Ga})-2 b_2 (W_{\Ga})\right)+1.
\end{multline}
\end{proof}

Now we want to check that, when restricted to $W_{\Ga'}$, the class $c$ gives a vector of 
values $V$ contained in a full path on $\Ga'$. We have:
\[
V\ =\ 
\begin{matrix}
                               &  0   &                  \\
(-2) \  0\ \cdots\  0    &  0    & 0\ (-s -1) \  2 \ 0 \cdots\ 0
\end{matrix}
\]

\begin{lem}\label{l:path2}
The vector $V$ of values defined by $c$ on $\Ga'$ is contained in a full path.
\end{lem}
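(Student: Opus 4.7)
The plan is to prove that $V$ lies on a full path by constructing, in the style of Lemma~\ref{l:path1}, two sequences of characteristic classes---one leading from $V$ forward to a terminal vector via iterated additions $K\mapsto K+2\PD(u)$, and the other leading from $V$ backward to an initial vector via iterated subtractions $K\mapsto K-2\PD(u)$. Concatenating the reverse of the backward sequence with the forward sequence then yields a full path in which $V$ appears.

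The forward direction is straightforward. The sole obstruction to $V$ being terminal is the vertex $q\in L'_2$ immediately to the right of $v$, where $V(q)=2=-q\cdot q$. Adding $2\PD(q)$ drops $V(q)$ to $-2$ and increases the values at $v$ and at the next vertex of $L'_2$ each by $2$. Iterating at successive right-neighbors along $L'_2$ propagates the value $2$ rightward, and after $m-2$ additions the resulting vector has all values in the terminal admissible range.

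The backward direction is the bulk of the argument. The unique vertex with $V(u)=u\cdot u$ is $y'_3$, so the first backward step must be the subtraction $2\PD(y'_3)$; successive subtractions at $u_2,\ldots,u_{2s+4}$ carry the value $-2$ inward along $L'_3$ until it reaches $z'$. A subtraction at $z'$ distributes $-2$ to each of the three neighbors of $z'$, and one then subtracts at $x'_2$ (which drops $V(v)$ to $-(s+3)$), at $v$ (which promotes $V(v)$ to $s+3$ and resets $x'_2$ and $q$), and at $w_1$; further subtractions at $z'$ and along $L'_3$, alternating with further steps on $L'_2$, complete the cascade. The key numerical feature allowing the cascade to close up is that $l(L'_3)=2(s+2)$ and the weight $-(s+3)$ on $v$ are precisely matched, so that a non-negative linear combination of Poincar\'e duals of vertices realizes $(V-K)/2$ for a suitable initial vector $K$.

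The main obstacle will be producing an explicit ordering of subtractions realizing the backward path: the cascade passes repeatedly through the central vertex $z'$, and at intermediate stages temporary $-2$ values appear at several vertices at once, so some bookkeeping is required to ensure that at every step the value at the selected vertex equals its self-intersection. We verify this directly in the shorthand notation introduced in the proof of Lemma~\ref{l:path1}; the required computation is longer than the one there but entirely analogous in structure, and terminates in finitely many steps at a characteristic vector all of whose values lie in the initial admissible ranges.
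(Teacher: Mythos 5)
Your plan coincides with the paper's proof: the paper likewise pushes the value $2$ rightward along $L'_2$ to reach a terminal vector, and builds the backward path by exactly the cascade you describe --- first at $y'_3$, inward along $L'_3$, through $z'$, then $x'_2$, $v$, the $L'_1$ vertex, and repeated passes through $z'$ --- closing up precisely because $l(L'_3)=2(s+2)$ is matched to the weight $-(s+3)$ of $v$. The one difference is that the paper carries out the entire backward cascade explicitly in the shorthand notation, and since that explicit bookkeeping is essentially the whole content of the lemma, a complete write-up would still have to supply it.
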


\begin{proof}
Observe that the values preventing $V$ from being a terminal, respectively an intial vector are 
$2$ and, respectively $-2$. We start by constructing the lower part of the full path from $V$ to a terminal vector $L$: 
\[
V = 
\begin{matrix}
                               &  0   &                  \\
(-2) \  0\ \cdots\  0    &  0    & 0\ (-s -1) \  2 \ 0 \cdots\ 0
\end{matrix}
\]
\medskip
\[
\lra
\begin{matrix}
                              &  0  &   \\
(-2) \  0\ \cdots\  0    &  0  &  0\ (-s+1) \  (-2) \ 2\ \cdots\ 0
\end{matrix}
\]
\medskip
\[
\lra\cdots\lra
\begin{matrix}
                              &  0  &   \\
(-2) \  0\ \cdots\  0    &  0  &  0\ (-s+1)\ 0\ \cdots\ (-2)\ 2
\end{matrix}
\]
\medskip
\[
\lra
\begin{matrix}
                              &  0  &   \\
(-2) \  0\ \cdots\  0    &  0  &  0\ (-s+1) \ 0\ \cdots\ 0\ (-2)
\end{matrix}
\ =\ L
\]
Observe that $L$ is an terminal vector because, since $s\geq 0$, we have 
\[
-s-3<-s+1<s+3. 
\]
Now we construct the upper part of the full path, connecting $V$ to an initial vector $K$: 
\[
V = 
\begin{matrix}
                              &  0  &   \\
(-2) \  0\ \cdots\  0    &  0  &  0\ (-s-1) \  2\  \cdots\ 0
\end{matrix}
\]
\medskip
\[
\lra
\begin{matrix}
                              &  0  &   \\
2 \ (-2)\ \cdots\  0    &  0  &  0\ (-s-1) \  2\  \cdots\  0
\end{matrix}
\]
\medskip
\[
\lra\cdots\lra
\begin{matrix}
                              &  0  &   \\
0 \ 0 \ \cdots\   2    &  (-2)  &  0\ (-s-1) \  2\  \cdots\  0
\end{matrix}
\]
\medskip
\[
\lra
\begin{matrix}
                              &  (-2)  &   \\
0 \ 0 \ \cdots\   0    &   2  &  (-2)\ (-s-1) \  2\  \cdots\ 0
\end{matrix}
\]
\medskip
\[
\lra
\begin{matrix}
                              &  (-2)  &   \\
0 \ 0 \ \cdots\   0    &   0  &  2\ (-s-3) \  2\  \cdots\ 0
\end{matrix}
\]
\medskip
\[
\lra
\begin{matrix}
                              &  (-2)  &   \\
0 \ 0 \ \cdots\   0    &   0  &  0\ (s+3) \  0\  \cdots\ 0
\end{matrix}
\]
\medskip
\[
\lra
\begin{matrix}
                              &  2  &   \\
0 \ 0 \ \cdots\   0    &   (-2)  &  0\ (s+3) \  0\  \cdots\ 0
\end{matrix}
\]
\medskip
\[
\lra
\begin{matrix}
                              &  0  &   \\
0 \ 0 \ \cdots\   (-2)  &   2  &  (-2)\ (s+3) \  0\  \cdots\ 0
\end{matrix}
\]
\medskip
\[
\lra\cdots\lra
\begin{matrix}
                              &  0  &   \\
2 \ 0 \ \cdots\   0  &   0  &  (-2)\ (s+3) \  0\  \cdots\ 0
\end{matrix}
\]
\medskip
\[
\lra
\begin{matrix}
                              &  0  &   \\
2 \ 0 \ \cdots\   0  &  (-2) &  2\ (s+1) \  0\  \cdots\ 0
\end{matrix}
\]
\medskip
\[
\lra
\begin{matrix}
                            & (-2)  &   \\
2 \ 0 \ \cdots\   (-2) &   2 &  0\ (s+1) \  0\  \cdots\ 0
\end{matrix}
\]
\medskip
\[
\lra\cdots\lra
\begin{matrix}
                            & (-2)  &   \\
0 \ 2 \ \cdots\   0 &   0 &  0\ (s+1) \  0\  \cdots\ 0
\end{matrix}
\]
\medskip
\[
\lra
\begin{matrix}
                            & 2  &   \\
0 \ 2 \ \cdots\   0 &  (-2) &  0\ s+1 \  0\  \cdots\ 0
\end{matrix}
\]
\medskip
\[
\lra
\begin{matrix}
                            & 0  &   \\
0 \ 2 \ \cdots\   (-2) &  2 &  (-2)\ s+1 \  0\  \cdots\ 0
\end{matrix}
\]
\medskip
\[
\lra\cdots\lra
\begin{matrix}
                             &  0 &   \\
0 \ 0 \ 2\cdots\   0 &  0 &  (-2)\ (s+1) \  0\  \cdots\ 0
\end{matrix}
\]
\medskip
\[
\lra
\begin{matrix}
                             &  0 &   \\
0 \ 0 \ 2\cdots\   0 &  (-2) &  2\ (s-1) \  0\  \cdots\ 0
\end{matrix}
\]
\medskip
\[
\lra
\begin{matrix}
                             &  (-2) &   \\
0 \ 0 \ 2\cdots\   (-2) &  2 &  0\ (s-1) \  0\  \cdots\ 0
\end{matrix}
\]
\medskip
\[
\lra\cdots\lra
\begin{matrix}
                                                             &  (-2) &   \\
\underbrace{0\ \cdots\ 2}_{2t+1}\cdots\   (-2) &  2 &  0\ (s+1-2t) \  0\  \cdots\ 0
\end{matrix}
\]
For $t=s+1$ we get
\[
\begin{matrix}
                       &  (-2) &   \\
0\ \cdots\ 2\ (-2) &  2 &  0\ (-s-1) \  0\  \cdots\ 0
\end{matrix}
\]
\medskip
\[
\lra
\begin{matrix}
                       &  (-2) &   \\
0\ \cdots\ 0\ 2 &  0 &  0\ (-s-1) \  0\  \cdots\ 0
\end{matrix}
\]
\medskip
\[
\lra
\begin{matrix}
                       &  2 &   \\
0\ \cdots\ 0\ 2 &  (-2) &  0\ (-s-1) \  0\  \cdots\ 0
\end{matrix}
\]
\medskip
\[
\lra
\begin{matrix}
                       &  0 &   \\
0\ \cdots\ 0\ 0 &  2 &  (-2)\ (-s-1) \  0\  \cdots\ 0
\end{matrix}
\]
\medskip
\[
\lra
\begin{matrix}
                       &  0 &   \\
0\ \cdots\ 0\ 0 &  0 &  2\ (-s-3) \  0\  \cdots\ 0
\end{matrix}
\]
\medskip
\[
\lra
\begin{matrix}
                       &  0 &   \\
0\ \cdots\ 0\ 0 &  0 &  0\ (s+3) \  (-2)\  \cdots\ 0
\end{matrix}
\]
\medskip
\[
\lra
\begin{matrix}
                       &  0 &   \\
0\ \cdots\ 0\ 0 &  0 &  0\ (s+1) \  (2)\ (-2)\ \cdots\ 0
\end{matrix}
\]
\medskip
\[
\lra\cdots\lra
\begin{matrix}
                       &  0 &   \\
0\ \cdots\ 0\ 0 &  0 &  0\ (s+1) \  0\ \cdots\ 0\ 2
\end{matrix}
=K.
\]
\end{proof}

\begin{proof}[Proof of Theorem~\ref{t:n1=1}]
By the assumptions on $Y$ and Lemmas~\ref{l:square2} and~\ref{l:path2}, Theorem~\ref{t:magic} 
applies. Therefore we conclude that the contact structure $\xi$ given in Figure~\ref{f:pres2} 
has non--zero contact invariant.
\end{proof}

\section{The proof of Theorem~\ref{t:main}}
\label{s:final}

In this section we use Proposition~\ref{p:reduction2} and Theorems~\ref{t:n1>1} and~\ref{t:n1=1} 
to prove Theorem~\ref{t:main}. 

Before we start with the proof we need an auxiliary result. 
Let $\Ga$ be the weighted tree of Figure~\ref{f:specialtree}, and let $\tilde\Ga$ denote the 
tree obtained by erasing all vertices of $\Ga$ on the third leg $L_3$ except the first vertex. 
In other words, $\tilde\Ga$ is obtained by  truncating $L_3$ so the resulting leg has length $1$. 
Let $Y=\del W_\Ga$ and $\tilde Y = \del W_{\tilde\Ga}$ be the resulting 3--manifolds. 

\begin{lem}\label{l:legchopped}
Suppose that $Y=\partial W_\Ga$ carries no transverse contact
structures. Then, $\tilde Y =\partial W_{\tilde\Ga}$ carries no
transverse contact structures and $e(\tilde Y)>0$.
\end{lem}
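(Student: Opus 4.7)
The plan is to establish both conclusions by contraposition. The case $n_3=1$ is trivial since then $\tilde Y = Y$, so assume $n_3>1$. A short continued fraction computation, using that $[c_2,\ldots,c_{n_3}]>1$ whenever each $c_i\geq 2$, gives
\[
\tilde r_3 = 1/c_1 < 1/[c_1,c_2,\ldots,c_{n_3}] = r_3,
\]
so $\tilde r_3 < r_3$ while $r_1,r_2$ remain unchanged, and $e(\tilde Y) = e(Y) - (r_3 - \tilde r_3)$.

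The heart of the argument is the following monotonicity claim: a transverse contact structure on $\tilde Y$ yields a transverse contact structure on $Y$. Topologically, $\tilde Y$ and $Y$ share the same complement of the third exceptional fiber---a Seifert fibered manifold $M$ over a disk with two exceptional fibers and torus boundary---and differ only in the gluing of a solid torus neighborhood of the third singular fiber. I would invoke the Jankins--Neumann--Naimi combinatorial characterization of foliations transverse to the Seifert fibration on $Y(-1;r_1,r_2,r_3)$ with $e>0$: the existence of such a foliation is encoded by an inequality on the triple $(r_1,r_2,r_3)$ which is monotone in the sense that enlarging a single $r_i$ (keeping the others fixed) preserves satisfiability. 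Combined with the equivalence between foliations transverse to the Seifert fibration and transverse contact structures on Seifert fibered $3$--manifolds (via Eliashberg--Thurston and Lisca--Mati\'c, as invoked in the proof of Proposition~\ref{p:reduction1}), a transverse contact structure on $\tilde Y$ thus yields one on $Y$.

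Contrapositively, the hypothesis that $Y$ has no transverse contact structure forces the same on $\tilde Y$, establishing the first conclusion. For the second conclusion, $e(\tilde Y)>0$, suppose instead $e(\tilde Y)\leq 0$. Then as in the proof of Proposition~\ref{p:reduction1}, $\tilde Y$ carries a transverse contact structure: when $e(\tilde Y)=0$, via Eisenbud--Hirsch--Neumann plus Eliashberg--Thurston (using $\tilde Y\not\cong S^2\x S^1$, which holds because $\tilde Y$ has three multiple fibers); when $e(\tilde Y)<0$, via the Neumann--Raymond singularity-link description together with Lisca--Mati\'c. By the monotonicity of the previous paragraph, $Y$ then admits a transverse contact structure, contradicting the hypothesis, so $e(\tilde Y)>0$. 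I anticipate the main obstacle will be the clean invocation of the Jankins--Neumann--Naimi monotonicity, which although part of the standard theory of horizontal foliations is combinatorially delicate to verify in the specific form needed here.
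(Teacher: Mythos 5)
Your overall route coincides with the paper's: reduce to the Lisca--Mati\'c (Jankins--Neumann--Naimi) realizability criterion, contrapose, and dispose of $e(\tilde Y)\le 0$ via Eisenbud--Hirsch--Neumann plus Eliashberg--Thurston and the $e<0$ existence result. The gap is the central ``monotonicity'' claim, which is false in exactly the direction you need it. The criterion says that $(r_1,r_2,r_3)$ is realizable iff there exist coprime integers $m>a>0$ with $1/r_1>m/a$, $1/r_2>m/(m-a)$ and $1/r_3>m$; these are \emph{lower} bounds on the $1/r_i$, so realizability is preserved under decreasing an $r_i$, not under enlarging it --- enlarging $r_3$ shrinks $1/r_3$ and can destroy the condition $1/r_3>m$. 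Concretely, $(1/2,1/3,2/11)$ is realizable (take $m=5$, $a=3$), while $(1/2,1/3,1/5)$, i.e.\ the manifold $-P\cong M_1$, is not, although only $r_3$ has been enlarged. Since $\tilde Y$ carries the \emph{smaller} value $\tilde r_3=1/c_1<r_3$, the implication you want (transverse structure on $\tilde Y$ implies one on $Y$) is precisely the direction in which the criterion is not monotone, so your transfer step, and with it both halves of your contrapositive, collapses.

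What rescues the statement is integrality, and this is the actual content of the paper's proof: if coprime $m'>a'>0$ realize $(r_1,r_2,1/c_1)$, then $c_1>m'$ forces $c_1-1\ge m'$, and since $1/r_3=[c_1,\ldots,c_{n_3}]>c_1-1$ one still gets $1/r_3>m'$, so the \emph{same} pair realizes $(r_1,r_2,r_3)$. This one-line computation is what replaces your monotonicity lemma and cannot be bypassed. Note also that the paper never transfers a hypothetical transverse structure from $\tilde Y$ back to $Y$: having shown the truncated triple is not realizable, it reads off directly from Lisca--Mati\'c that $\tilde Y$ carries no transverse contact structure and that $e(\tilde Y)\ge 0$ (because every such manifold with $e<0$ does carry one), and then excludes $e(\tilde Y)=0$ by applying Eisenbud--Hirsch--Neumann and Eliashberg--Thurston to $\tilde Y$ itself. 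Your architecture becomes correct once the transfer step is repaired as above.
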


\begin{proof}
We have $Y=Y(-1;r_1,r_2,r_3)$ for some $r_i\in (0,1)\cap\Q$, $i=1,2,3$. 
Recall that by~\cite[Theorem~1.3(c)]{LiMa} the nonexistence of transverse contact structures on 
$Y$ implies that the triple $(r_1,r_2,r_3)$ is not~\emph{realizable}, i.e.~there are no coprime 
integers $m>a>0$ such that, assuming $r_1\geq r_2 \geq r_3$,
\[
\frac 1{r_1} > \frac{m}{a},\quad 
\frac 1{r_2} > \frac{m}{m-a},\quad\text{and}\quad  
\frac 1{r_3} > m.
\]
We also have $\tilde Y=Y(-1;r'_1,r'_2,r'_3)$, where the vector $(r_1', r_2', r_3')$ 
is easily determined to be: 
\[
r_1'= r_1,\quad 
r_2'= r_2,\quad\text{and}\quad
r_3' = \frac 1{c_1},
\]
where $c_1$ is the first continued fraction coefficient of $\frac{1}{r_3}$. We claim that 
the triple $(r_1', r_2', r_3')$ is not realizable. In fact, a pair of coprime integers $m'>a'>0$ with
\[
\frac 1{r'_1} > \frac{m'}{a'},\quad 
\frac 1{r'_2} > \frac{m'}{m'-a'},\quad\text{and}\quad  
\frac 1{r'_3} > m',
\]
would show that $(r_1,r_2,r_3)$ is also realizable, because 
$c_1 = \frac 1{r'_3} > m'$ implies 
\[
\frac{1}{r_3} = [c_1,\ldots, c_{n_3}] > c_1-1 \geq m'.
\]
Since $(r_1', r_2', r_3')$ is not realizable, by~\cite{LiMa} $\tilde Y$ admits 
no transverse contact structures and $e(\tilde Y)\geq 0$. Moreover,  if $e(\tilde Y)=0$ 
then by~\cite{EHN} $\tilde Y$ admits smooth transverse foliations. The fact that $\tilde Y$ 
has a fibration with $3$ multiple fibers implies that $\tilde Y\not\cong S^2\x S^1$, therefore
the results of~\cite{ET} can be applied. Hence, if $e(\tilde Y)=0$, any smooth transverse foliation on $\tilde Y$ could be approximated by a transverse contact structure. 
We conclude that $e(\tilde Y) > 0$ and the lemma is proved.
\end{proof}

\begin{proof}[Proof of Theorem~\ref{t:main}]
By Proposition~\ref{p:reduction2}, to prove Theorem~\ref{t:main} we may assume that $Y=\del W_\Ga$, 
where $\Ga$ is the weighted tree of Figure~\ref{f:specialtree}. More precisely, we may assume: 
\begin{enumerate}
\item
$Y\cong Y(-1;r_1,r_2,r_3)$ and $e(Y)>0$;
\item
$a_1=\cdots=a_k=2$ for some $k\geq 1$ and either 
\begin{itemize}
\item
$n_1=k$ or 
\item
$n_1>k$ and $a_{k+1}>2$;
\end{itemize}
\item
$c_1\geq b_1=k+2$. 
\end{enumerate}
On the other hand, since transverse contact structures are symplectically 
fillable~\cite{LiMa} and therefore tight, to prove Theorem~\ref{t:main} we may also assume: 
\begin{itemize}
\item[(4)]
$Y$ does not carry transverse contact structures. 
\end{itemize}
Now consider the 3--manifold $\tilde Y$ defined above. We claim that if there exists a contact 
structure $\tilde\xi$ on $\tilde Y$ given by a contact surgery as in Figure~\ref{f:OSzIII2} with contact 
invariant $c(\tilde Y,\tilde\xi)\neq 0$, then there exists a contact structure $\xi$ on the 3--manifold $Y
$ satisfying (1)--(4) above, with $c(Y,\xi)\neq 0$. In fact, since all the vertices erased from $\Ga$ to 
obtain $\tilde\Ga$ have weights $\leq -2$ then, under the above assumptions on $(\tilde Y,\tilde\xi)$, as 
in the proof of Proposition~\ref{p:reduction2} we can perform suitable contact $(-1)$--surgeries on 
$(\tilde Y,\tilde\xi)$ to obtain $(Y,\xi)$ with $c(Y,\xi)\neq 0$. 

We observe that, by construction, the 3--manifold $\tilde Y$ satisfies either the assumptions of 
Theorem~\ref{t:n1>1} or those of Theorem~\ref{t:n1=1}, depending on whether the first leg $\tilde L_1$ 
of the tree $\tilde\Ga$ has length, respectively, bigger than $1$ or equal to $1$. If the length of 
$\tilde L_1$ is bigger than $1$ then Theorem~\ref{t:n1>1} applies and we are done. If the length of 
$\tilde L_1$ is equal to $1$ then by Theorem~\ref{t:n1=1} either  $\tilde Y\cong M_n$ for some 
$n\geq 1$ or $\tilde Y$ carries a contact structure $\tilde\xi$ given by a surgery diagram as in 
Figure~\ref{f:OSzIII2} and such that $c(Y,\tilde\xi)\neq 0$. In the latter case we are done, so 
we may assume $\tilde Y\cong M_n$ for some $n\geq 1$. If $Y\cong\tilde Y$ we are done, 
therefore we assume $Y\not\cong\tilde Y$. This means that the third leg $L_3$ of the tree 
$\Ga$ has length greater than $1$, and with a little bit of Kirby calculus it is easy to see how  
this implies that $Y$ is orientation--preserving diffeomorphic to $S^3_\ga(T_{2,2n+1})$, 
the result of a rational surgery along the $(2,2n+1)$--torus knot, with $\ga\not\in\Z$, hence 
in particular with $\ga\neq 2n-1$. In this case the existence of a tight contact structure on $Y$ 
follows by~\cite[Theorem~1.1]{OSzI}, so the proof is finished. 
\end{proof}

\end{document}